\newtheorem{thm}{Theorem}[section]
\newtheorem{example}[thm]{Example}
\newtheorem{theorem}[thm]{Theorem}
\newtheorem{prop}[thm]{Proposition}
\newtheorem{lemma}[thm]{Lemma}
\newtheorem{conj}[thm]{Conjecture}
\newtheorem{remark}[thm]{Remark}
\newtheorem{defn}[thm]{Definition}
\newcommand{\bba}{{\mathbb{A}}}
\newcommand{\bbz}{{\mathbb{Z}}}
\newcommand{\bbc}{\mathbb{C}}
\newcommand{\bbf}{\mathbb{F}}
\title[Punctual Hilbert schemes of points of $\bba^{3}$]{Punctual Hilbert schemes of points of $\bba^{3}$ in the Grothendieck group of varieties}
\author{Sailun Zhan}
\address{Department of Mathematical Sciences, Binghamton University, Binghamton, NY, 13902, U.S.A.}
\email{zhans@binghamton.edu}
\subjclass[2010]{14F25, 14J30, 14Q15}
\keywords{Hilbert schemes of points, Grothendieck group of varieties, Hodge-Deligne polynomials}
\begin{document}

\begin{abstract}
We give an explicit stratification of the punctual Hilbert schemes of $n$ points of $\bba^{m+1}$ with respect to $m$-dimensional partitions in the Grothendieck group of varieties. As an application, we calculate the classes of the punctual Hilbert schemes of $n$ points of $\bba^3$ and the classes of the Hilbert schemes of $n$ points of $\bba^3$ in the Grothendieck of varieties for $n\leq 5$. 
\end{abstract}

\maketitle

\section{Introduction}

Let $k$ be a field, and let $n$ be a positive integer. Denote by ${\rm Hilb}_{0}^{n}(\bba^{m+1})$ the punctual Hilbert scheme of $n$ points of $\bba^{m+1}$, which parametrizes closed subschemes of length $n$ of $\bba^{m+1}$ which support on the origin. In other words, it parametrizes the codimension $n$ ideals of $k[[x_{0},x_{1},...,x_{m}]]$. Let $m=1$. When $k=\mathbb{C}$, ${\rm Hilb}_{0}^{n}(\bba^{2})$ has a cellular decomposition parametrized by the partitions of $n$ by the Bia\l ynicki-Birula theorem \cite{ES87}. 

\begin{defn}
Let $k$ be any field. The Grothendieck group of $k$-varieties $K_{0}({\rm Var}_{k})$ is the free abelian group generated by isomorphism classes of $k$-varieties modulo the relations $[X]=[Y]+[X\slash Y]$ for all pairs $(X,Y)$ consisting of a variety $X$ and a closed subvariety $Y$. By variety we mean a reduced separated scheme of finite type over $k$. It has a commutative ring structure by $[X][Y]=[X\times Y]$.
\end{defn}

When $k$ is any field, the decomposition of ${\rm Hilb}_{0}^{n}(\bba^{2})$ into affine spaces is also true and an explicit ``cell decomposition" and explicit parametrizations of the cells are given: 

\begin{theorem}\label{LL}\cite[Proposition A.2]{LL20}
Let $k$ be any field, and let $n$ be any positive integer. Then in the Grothendieck group of varieties $K_{0}({\rm Var}_{k})$,
\[
[{\rm Hilb}_{0}^{n}(\bba^{2})]=\sum_{\beta\in P(n)}[\bba^{n-|\beta|}],
\]
where $P(n)$ denotes the set of partitions $\beta$ of $n$, and $|\beta|$ is the number of parts of $\beta$.
\end{theorem}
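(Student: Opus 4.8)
The plan is to stratify $\mathrm{Hilb}_0^n(\bba^2)$ by the \emph{staircase shape} of the associated monomial ideal. For a length-$n$ ideal $I\subset k[[x,y]]$, consider the initial ideal $\mathrm{in}(I)$ with respect to, say, a graded term order refining degree. The quotient $k[[x,y]]/I$ has dimension $n$ over $k$, and the standard monomials (those not in $\mathrm{in}(I)$) form a staircase diagram, i.e.\ a partition $\beta$ of $n$. Let $C_\beta\subset \mathrm{Hilb}_0^n(\bba^2)$ be the locally closed subscheme of ideals $I$ whose standard-monomial staircase is exactly $\beta$. The first step is to check that $\{C_\beta\}_{\beta\in P(n)}$ is a stratification (a finite partition into locally closed subsets, each closed in the union of itself and the higher-codimension ones), which follows from the usual semicontinuity of Gröbner degeneration: the locus where $\mathrm{in}(I)\supseteq$ a fixed monomial ideal is closed.

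The second step — the heart of the argument — is to identify each $C_\beta$ with an affine space $\bba^{n-|\beta|}$. Fix $\beta$, let $M_\beta$ be the corresponding monomial ideal with standard monomials $B_\beta=\{x^iy^j : (i,j)\in\beta\}$, $|B_\beta|=n$. Any $I\in C_\beta$ has a unique reduced Gröbner basis: for each minimal generator $x^ay^b$ of $M_\beta$ there is exactly one generator of $I$ of the form $x^ay^b - \sum_{(i,j)\in\beta} c_{(i,j)}^{(a,b)}\, x^iy^j$, where only those standard monomials $x^iy^j$ that are \emph{smaller than} $x^ay^b$ in the term order can appear with nonzero coefficient (reducedness). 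This presents $C_\beta$ as a locally closed subscheme of the affine space $\bba^N$ whose coordinates are the admissible $c_{(i,j)}^{(a,b)}$. The claim $C_\beta\cong \bba^{n-|\beta|}$ then amounts to: (i) after a careful count, the number of ``free'' coefficients — those not forced to vanish by Buchberger's criterion (all $S$-polynomials must reduce to zero) — equals $n-|\beta|$; and (ii) the remaining coefficients are determined as polynomials in the free ones, so that $C_\beta$ is the graph of a morphism and hence $\cong\bba^{n-|\beta|}$. The cleanest way to organize (i)--(ii) is to appeal to the known combinatorics for $\mathrm{Hilb}$ of surfaces: the tangent space / cell dimension at the monomial ideal $M_\beta$ in the Bia\l ynicki--Birula decomposition has dimension $n-|\beta|$ (the number of ``arms plus legs''-type count), and Gröbner-basis normal forms exhibit an explicit affine chart realizing this. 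One can cite or reprove Ellingsrud--Str\o mme type formulas, or argue directly via Buchberger that the $S$-polynomial relations triangularize so that exactly the coefficients indexed by the border boxes directly below/left of the staircase corners survive.

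The main obstacle I expect is step (ii): showing that Buchberger's $S$-polynomial conditions cut out a \emph{graph} (so the stratum is an affine space on the nose, not merely an affine variety of the right dimension) rather than something only piecewise affine or singular. One must check that the reduction of each $S$-polynomial, when set to zero, expresses each ``dependent'' coefficient as a polynomial in strictly earlier (in term order) ``independent'' coefficients with no constraints among the independents; this is where a good choice of term order and an induction on the term order of the leading monomials does the work. Concretely, after eliminating dependent coefficients one by one, no further relations appear, so the defining equations become of the form $c_{\text{dep}} = (\text{polynomial in already-chosen coordinates})$, giving an isomorphism onto $\bba^{n-|\beta|}$ via projection to the independent coordinates.

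Finally, summing over all strata in $K_0(\mathrm{Var}_k)$ using the scissor relation $[X]=[Y]+[X\setminus Y]$ repeatedly gives
\[
[\mathrm{Hilb}_0^n(\bba^2)] \;=\; \sum_{\beta\in P(n)} [C_\beta] \;=\; \sum_{\beta\in P(n)} [\bba^{\,n-|\beta|}],
\]
which is the assertion. I would also remark that the bookkeeping is independent of $\mathrm{char}\,k$, since Gröbner theory and Buchberger's criterion work over an arbitrary field (indeed over $\bbz$), so the stratification and the chart identifications descend from the universal case.
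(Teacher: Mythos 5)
Your overall architecture---stratify by the staircase of a distinguished initial/leading ideal, parametrize each stratum by the coefficients of a canonical generating set, and show the syzygy conditions triangularize---is essentially the same as the paper's (and as \cite[Proposition A.2]{LL20}, which the paper cites): the paper's stratification by the type $\lambda$ coming from the colon ideals $(I:x^k)$ is exactly an initial-ideal stratification for a local lex-type order, and its generators $F_M$ supported on the border $\partial\mathcal{O}_\lambda$ play the role of your reduced Gr\"obner basis. The difference is only in the technical engine (Buchberger's criterion versus commuting formal multiplication matrices), not in the route.

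The genuine gap is that you assert, but do not prove, the one step that carries all the weight: that after eliminating dependent coefficients ``no further relations appear,'' so that each stratum $C_\beta$ is the graph of a morphism and hence isomorphic to $\bba^{n-|\beta|}$. This cannot be waved through, because the identical claim is \emph{false} one dimension up: for $\bba^3$ the analogous strata $V_\lambda$ are cut out by the same kind of quadratic commutation/S-polynomial relations, and residual quadrics do survive (e.g.\ $a^{002}_{010}a^{200}_{001}=0$ in Example \ref{minimal}), so the strata are not affine spaces. Whatever makes the two-variable case work is therefore a specific feature that must be exhibited---in the paper's language (Remark \ref{order}) one puts a total order on the coefficients and checks that every relation expresses a variable as a polynomial in strictly larger ones, with none left over. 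Your proposal also leaves two smaller points unverified: the count that the number of surviving free coefficients is exactly $n-|\beta|$ (which requires identifying the correct statistic; note the paper passes from $\lambda$ to its \emph{dual} partition $\beta$, so rows versus columns must be tracked), and the dependence on the term order (you take ``a graded order refining degree,'' but the triangularization argument is order-sensitive, and the borderline cases are exactly where an unlucky order could leave a genuine constraint). Appealing to Ellingsrud--Str\o mme would import the Bia\l ynicki-Birula machinery over $\bbc$, which is what the explicit parametrization is designed to avoid in order to work over an arbitrary field; so if you go that route you lose the stated generality, and if you go the Buchberger route you still owe the induction.
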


Let $I\subset k[[x,y]]$ be a codimension $n$ ideal. Define $I_{k}:=(I:x^{k})$. Then $(I_{k}+(x))/(x)=(y^{\lambda_{k}})\subseteq k[y]$ for some $\lambda_{k}$, and $\lambda_{0}\geq\lambda_{1}\geq...\geq\lambda_{r}>\lambda_{r+1}=0$ gives a partition $\lambda$ of $n$. We say the ideal $I$ is of type $\lambda$. Let $\beta_{0}\geq\beta_{1}\geq...\geq\beta_{s}>\beta_{s+1}=0$ be the dual partition $\beta$. Then there is a one-to-one correspondence between the codimension $n$ ideals of type $\lambda$ in $k[[x,y]]$ and the $k$-valued points in $\bba^{n-|\beta|}$. We generalize this idea in \cite{LL20} to higher dimensions. 

Given two $n$-tuples $(a_{1},...,a_{n}),(b_{1},...,b_{n})\in\mathbb{N}^{n}$, we write $(a_{1},...,a_{n})\leq(b_{1},...,b_{n})$ if $a_{i}\leq b_{i}$ for all $1\leq i\leq n$. 
\begin{defn}
Let $d\geq 1$ and $n\geq 0$. A \emph{d-dimensional partition} $\lambda$ of $n$ is an array
\[
\lambda=(\lambda_{r_{1},...,r_{d}})_{r_{1},...,r_{d}}
\]
of nonnegative integers $\lambda_{r_{1},...,r_{d}}$ indexed by the tuples $(r_{1},...,r_{d})\in\mathbb{N}^{d}$ such that
\[
\sum_{r_{1},...,r_{d}}\lambda_{r_{1},...,r_{d}}=n,
\]
and $\lambda_{r_{1},...,r_{d}}\geq\lambda_{s_{1},...,s_{d}}$ if $(r_{1},...,r_{d})\leq(s_{1},...,s_{d})$. Denote by $P_{d}(n)$ the number of $d$-dimensional partitions of $n$.
\end{defn}

\begin{theorem}\label{decomp}
Given positive integers $n$ and $m$, we have the following decomposition for the punctual Hilbert scheme of $n$ points on $\bba^{m+1}$ in the Grothendieck group of varieties over a field $k$:
\[
Hilb_{0}^{n}(\bba^{m+1})=\sum_{\lambda}V_{\lambda},
\]
where $\lambda$ goes through all the $m$-dimensional partition of $n$, and $V_{\lambda}$ is an affine variety with explicit affine coordinates and relations.
\end{theorem}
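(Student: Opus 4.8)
The plan is to run, one variable at a time, the colon-ideal construction that gives Theorem~\ref{LL}. Write $R = k[[x_0,x_1,\dots,x_m]]$ and $\mathfrak m' = (x_1,\dots,x_m)$, so $R/\mathfrak m' \cong k[[x_0]]$. Given $I \in {\rm Hilb}_0^{n}(\bba^{m+1})$ and $\mathbf r = (r_1,\dots,r_m) \in \mathbb N^m$, set $I_{\mathbf r} = (I : x_1^{r_1}\cdots x_m^{r_m})$; since $k[[x_0]]$ is a discrete valuation ring and $x_0$ is nilpotent modulo $I$, the image of $I_{\mathbf r}$ in $k[[x_0]]$ is $(x_0^{\lambda_{\mathbf r}})$ for a well-defined integer $\lambda_{\mathbf r}\ge 0$. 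As $x_1^{r_1}\cdots x_m^{r_m}$ divides $x_1^{s_1}\cdots x_m^{s_m}$ whenever $\mathbf r\le\mathbf s$, we get $I_{\mathbf r}\subseteq I_{\mathbf s}$ and hence $\lambda_{\mathbf r}\ge\lambda_{\mathbf s}$, so $\lambda=(\lambda_{\mathbf r})_{\mathbf r}$ is an $m$-dimensional partition of $\sum_{\mathbf r}\lambda_{\mathbf r}$; call it the \emph{type} of $I$. The crucial preliminary fact is that $\sum_{\mathbf r}\lambda_{\mathbf r}=n$, and more precisely that the monomial set $B_\lambda = \{\,x_0^{j}x_1^{r_1}\cdots x_m^{r_m} : 0\le j<\lambda_{\mathbf r}\,\}$ — which has $\sum_{\mathbf r}\lambda_{\mathbf r}$ elements and is a lower set in $\mathbb N^{m+1}$ because $\lambda$ is a partition — maps to a $k$-basis of $R/I$.

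I would prove this by induction on $m$, with base case $m=1$ being \cite[Proposition A.2]{LL20}. Multiplication by $x_m$ gives short exact sequences $0 \to R/(I:x_m^{s+1}) \xrightarrow{\,\cdot x_m\,} R/(I:x_m^{s}) \to R/\!\big((I:x_m^{s})+(x_m)\big) \to 0$, and telescoping over $s\ge0$ (the chain $(I:x_m^{s})$ exhausts $R$ since $x_m$ is nilpotent modulo $I$) presents $R/I$ as an iterated extension of the finite-length rings $R_s := k[[x_0,\dots,x_{m-1}]]/\overline{(I:x_m^{s})}$, where the bar denotes image modulo $x_m$; in particular $n=\sum_s\dim_k R_s$. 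The inductive hypothesis furnishes, for each $s$, an $(m-1)$-dimensional partition $\mu^{(s)}$ of $\dim_k R_s$ and a compatible monomial basis of $R_s$; and since $(I:x_m^{s})\subseteq(I:x_m^{s+1})$ forces $\mu^{(s)}\ge\mu^{(s+1)}$ coordinatewise, these glue to an $m$-dimensional partition $\widetilde\lambda$ of $n$ and a monomial basis of $R/I$ indexed by $B_{\widetilde\lambda}$. The delicate step — which I expect to be the main obstacle — is to identify $\widetilde\lambda$ with the intrinsic type $\lambda$: taking a colon does not commute with reducing modulo $x_m$, so only $\lambda_{\mathbf r}\ge\widetilde\lambda_{\mathbf r}$ is automatic, and equality must be forced from $\sum_{\mathbf r}\widetilde\lambda_{\mathbf r}=n$ together with $B_{\widetilde\lambda}$ being a spanning set of $R/I$ of size $n$, which yields $\sum_{\mathbf r}\lambda_{\mathbf r}\le n$ as well. (Equivalently, one checks directly that $B_\lambda$ is $k$-linearly independent in $R/I$, by reducing a hypothetical relation first modulo $\mathfrak m'$ and then modulo the monomial ideals $(x_1^{r_1+1},\dots,x_m^{r_m+1})$ to isolate one index $\mathbf r$ at a time.)

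With the basis lemma available, fix an $m$-dimensional partition $\lambda$ of $n$ and describe $V_\lambda:=\{I\in{\rm Hilb}_0^{n}(\bba^{m+1}):\operatorname{type}(I)=\lambda\}$ explicitly. For such $I$ the monomials in $B_\lambda$ form a basis of $R/I$, so $I$ is determined by the normal forms — as $k$-linear combinations of $B_\lambda$ — of the finitely many monomials just outside $B_\lambda$ (those not in $B_\lambda$ all of whose proper divisors lie in $B_\lambda$); the coefficients of these normal forms are the affine coordinates, lying in some $\bba^{N}$. There are three families of relations: (i) border-basis/Buchberger compatibility conditions ensuring the prescribed reduction rule comes from an honest ideal $I$ with $B_\lambda$ an exact monomial basis of $R/I$; (ii) the conditions that the multiplication operators by $x_0,\dots,x_m$ on $R/I$ be nilpotent, i.e.\ that $I$ be supported at the origin; (iii) the conditions $x_0^{\lambda_{\mathbf r}}x_1^{r_1}\cdots x_m^{r_m}\in I + x_1^{r_1}\cdots x_m^{r_m}\mathfrak m'$ expressing $\operatorname{type}(I)\le\lambda$ — all polynomial in the chosen coordinates. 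Conditions (i) identify the ambient locus with the standard affine chart $U_{B_\lambda}$ of ${\rm Hilb}^{n}(\bba^{m+1})$ attached to the basis $B_\lambda$, conditions (ii) carve out inside it the closed—hence still affine—subvariety of ideals supported at the origin, and (iii) a further closed subvariety; and on that chart $\operatorname{type}(I)\le\lambda$ is automatically an equality, since any colength-$n$ ideal with $\operatorname{type}(I)=\mu\le\lambda$ satisfies $n=\sum_{\mathbf r}\mu_{\mathbf r}\le\sum_{\mathbf r}\lambda_{\mathbf r}=n$ by the basis lemma, whence $\mu=\lambda$. So $V_\lambda$ is an affine variety with exactly the coordinates and relations just described.

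It remains to assemble the pieces. Every $I\in{\rm Hilb}_0^{n}(\bba^{m+1})$ has a unique type, which by the basis lemma is an $m$-dimensional partition of $n$, so ${\rm Hilb}_0^{n}(\bba^{m+1})=\bigsqcup_{\lambda}V_\lambda$ is a partition into finitely many ($P_m(n)$ in all) locally closed subvarieties. Because $K_0({\rm Var}_k)$ is generated by isomorphism classes of varieties subject only to the scissor relations, any finite partition into locally closed subvarieties is additive on classes, giving $[{\rm Hilb}_0^{n}(\bba^{m+1})]=\sum_{\lambda}[V_\lambda]$, as claimed. Everything past the second paragraph is bookkeeping; the single hard point is the identification of $\widetilde\lambda$ with $\lambda$ — equivalently the summation identity $\sum_{\mathbf r}\lambda_{\mathbf r}=n$ — and for $m+1=3$ that bookkeeping can be pushed far enough to extract the classes of ${\rm Hilb}_0^{n}(\bba^{3})$ for small $n$.
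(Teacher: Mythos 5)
Your overall architecture matches the paper's: stratify by an $m$-dimensional partition ``type,'' parametrize each stratum by the coefficients of a border prebasis adapted to the staircase $\mathcal{O}_\lambda$, obtain the defining relations from commutativity of the formal multiplication matrices (the paper invokes \cite[Theorem 6.4.30, Proposition 6.4.32]{KR05} for exactly your step (i)), and conclude by additivity over a finite locally closed decomposition (\cite[Lemma A.4]{LL20}). The problem is your definition of the type. You set $I_{\mathbf r}=(I:x_1^{r_1}\cdots x_m^{r_m})$ and read off $\lambda_{\mathbf r}$ from the image of $I_{\mathbf r}$ in $k[[x_0]]$, and your whole construction rests on the ``crucial preliminary fact'' $\sum_{\mathbf r}\lambda_{\mathbf r}=n$. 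That fact is false for this definition. Take $m=2$ and $I=(x_0,\;x_1-x_2,\;x_2^2)$, a colength-$2$ ideal supported at the origin. Then $(I:x_2)=(x_0,x_1,x_2)$, since $x_1x_2=x_2(x_1-x_2)+x_2^2\in I$ while $x_2\notin I$; its image in $k[[x_0]]$ is $(x_0)$, so $\lambda_{0,1}=1$. Likewise $\lambda_{0,0}=\lambda_{1,0}=1$, and all other entries vanish, so $\sum_{\mathbf r}\lambda_{\mathbf r}=3\neq 2$. The step you yourself flagged as ``the main obstacle'' --- identifying your $\lambda$ with the $\widetilde\lambda$ produced by the one-variable-at-a-time telescoping --- therefore cannot be forced: here $\widetilde\lambda_{0,1}=0<1=\lambda_{0,1}$, and your $B_\lambda=\{1,x_1,x_2\}$ is visibly not linearly independent in $R/I$.

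The repair is exactly what the paper does in Proposition \ref{partition}: define the type by the \emph{iterated} construction, coloning by $x_i^{r_i}$ and reducing modulo $(x_i)$ one variable at a time, so that the telescoping exact sequences give $\sum_{\mathbf r}\lambda_{\mathbf r}=n$ directly and no identification step is needed. Two consequent adjustments. First, your condition (iii), $x_0^{\lambda_{\mathbf r}}x^{\mathbf r}\in I+x^{\mathbf r}\mathfrak{m}'$, is the membership statement for the single-colon type and must be replaced by the weaker membership of Lemma \ref{boundary}(1), whose correction terms lie in $\sum_i x_1^{r_1}\cdots x_{i-1}^{r_{i-1}}x_i^{r_i+1}\,k[x_0,x_i,\dots,x_m]$ rather than in $x^{\mathbf r}\mathfrak{m}'$. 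Second, the paper encodes ``type exactly $\lambda$'' not by extra closed conditions but structurally, by restricting the tails of the border generators to monomials whose $(s_1,\dots,s_m)$-index is lexicographically greater than that of the leading term (Propositions \ref{generators} and \ref{newgen}); this restriction is also what makes the coefficients unique (Remark \ref{unique}), hence makes $V_\lambda$ an honest parameter space for the ideals of that type. The remaining steps of your argument --- nilpotency as a closed condition and additivity of classes over the locally closed pieces --- are fine.
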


Notice that when $m=1$, this prove Theorem \ref{LL} with another explicit parametrizations of the cells. In general, for example when $m=2$, $V_{\lambda}$ is not an affine space (see Example \ref{minimal}). However, it seems that they may be polynomials in $L$ in the Grothendieck group of varieties, where $L:=[\bba^1]$. So we make the following conjecture:

\begin{conj}
All the strata $V_{\lambda}$ in the puntual Hilbert schemes of points in $\bba^3$ are polynomials in $L$ in the Grothendieck group of varieties.
\end{conj}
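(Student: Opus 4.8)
The plan is to reduce, by induction on $n$, the class $[V_\lambda]$ of each stratum to classes already known to lie in $\mathbb Z[L]$ --- ultimately to the affine-space classes $[\bba^{n-|\beta|}]$ of Theorem \ref{LL}. First I would make the presentation of $V_\lambda$ from Theorem \ref{decomp} completely explicit in the case $m=2$: $V_\lambda$ is a locally closed subvariety of an affine space $\bba^N$ whose coordinates are the reduction coefficients $c_{\alpha\beta}$ expressing each generator $x^\alpha$ of the monomial ideal $\lambda$ modulo $I$ against the standard monomials $x^\beta\notin\lambda$, and whose equations are the quadratic Buchberger-type relations forcing the $c_{\alpha\beta}$ to come from an honest ideal of type $\lambda$. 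The construction behind Theorem \ref{decomp} is visibly equivariant for the scaling torus $T=\mathbb G_m^{3}$ acting on $\bba^3$ (rescaling $x,y,z$ changes neither the colon-ideal filtration nor the induced $2$-dimensional partition), so $V_\lambda$ is $T$-stable inside $\bba^N$ with a linear $T$-action, $T$ acting on $c_{\alpha\beta}$ through the weight $\alpha-\beta$; the point $[\lambda]$ corresponding to $I=\lambda$ is a $T$-fixed point.

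Next I would run a Bia\l ynicki--Birula/Hesselink-type decomposition of $V_\lambda$ with respect to a generic one-parameter subgroup $\chi\colon\mathbb G_m\to T$. Since $V_\lambda\subset\bba^N$ is a $\chi$-stable locally closed subvariety of a linear $\chi$-representation, one gets a stratification $V_\lambda=\bigsqcup_F V_\lambda^{+}(F)$ indexed by the connected components $F$ of the fixed locus $V_\lambda^{\chi}$, where $V_\lambda^{+}(F)$ is the attracting set of $F$, and hence $[V_\lambda]=\sum_F[V_\lambda^{+}(F)]$. I would then aim to prove two things: (i) each attracting map $V_\lambda^{+}(F)\to F$ is a Zariski-locally trivial affine bundle of some rank $d_F$, so $[V_\lambda^{+}(F)]=L^{d_F}[F]$; and (ii) each fixed component $F$ is isomorphic to a product of strata of punctual Hilbert schemes of $\bba^2$, one for each ``slice'' of the $2$-dimensional partition $\lambda$ (equivalently, a product of strata $V_{\lambda'}$ of Theorem \ref{decomp} for strictly smaller values of $n$), so that $[F]\in\mathbb Z[L]$ by Theorem \ref{LL} and the inductive hypothesis. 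Granting (i) and (ii), $[V_\lambda]=\sum_F L^{d_F}[F]\in\mathbb Z[L]$, and the base cases of small $n$ are covered by the explicit computations already carried out in the paper.

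I expect step (i) to be the main obstacle. The affine-bundle conclusion in the Bia\l ynicki--Birula decomposition is classically available only when the ambient variety is smooth, whereas $V_\lambda$ is typically singular --- this is already visible in Example \ref{minimal} --- and for a singular $\mathbb G_m$-variety the attracting map is only affine; the affine cone over an elliptic curve contracts to its vertex yet does not have class in $\mathbb Z[L]$, so ``contraction to a fixed point'' alone is far too weak. The real work is therefore to establish (i) directly from the explicit equations of Theorem \ref{decomp}: I would split the coordinates $c_{\alpha\beta}$ into those of $\chi$-weight $0$ (coordinates on $F$, after translating $F$ to pass through the origin) and those of nonzero weight, and try to show that the Buchberger relations let one solve for the nonzero-weight coordinates in a triangular fashion --- each as a polynomial in the weight-$0$ coordinates and the strictly-lower-weight coordinates --- so that projection onto the weight-$0$ part realizes $V_\lambda^{+}(F)$ as an iterated affine fibration over $F$. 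A secondary difficulty is making (ii) precise: one must show that a $\chi$-fixed ideal of type $\lambda$ is forced to be block-diagonal along the fibres of the projection $\bba^3\to\bba^2$ dual to $\chi$, and then match the resulting product of lower Hilbert-scheme strata with the $V_{\lambda'}$ of Theorem \ref{decomp}. If either fails for some $\lambda$, the fallback is to iterate --- replace $V_\lambda$ by the finitely many $V_\lambda^{+}(F)$ and re-stratify each with a fresh cocharacter, tracking a suitable complexity to guarantee the process terminates in pieces whose classes lie in $\mathbb Z[L]$.
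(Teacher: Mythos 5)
This statement is a \emph{conjecture} in the paper: no proof is given there, only explicit verification for $n\leq 5$ by direct computation of each stratum. So the relevant question is whether your proposal actually closes the conjecture, and it does not --- you have correctly identified where the difficulty lies, but the difficulty is not resolved, and in fact your step (i) is refuted by the paper's own examples.

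Concretely, Example \ref{minimal} gives a stratum $V_\lambda\cong Z\times\bba^2$ with $Z=\{ab=0\}\subset\bba^2$, so $[V_\lambda]=2L^3-L^2$. The torus acts linearly on the coordinates, and for any cocharacter $\chi$ the attracting map of $V_\lambda$ onto a component of $V_\lambda^{\chi}$ is not an affine bundle: if $\chi$ scales $a$ and $b$ the fiber over the fixed locus is $Z$ itself (or all of $V_\lambda$ over a single fixed point), and the single relation $ab=0$ cannot be solved for either variable, so the ``triangular elimination'' you propose has nothing to act on. The class $2L^3-L^2$ is not of the form $L^{d}$ times the class of an affine space, so no choice of $\chi$ rescues (i) as stated; the conjecture holds in this example only because $[Z]=2L-1\in\bbz[L]$ for a reason unrelated to the Bia\l ynicki--Birula mechanism. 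Your fallback (re-stratify each attracting set with a fresh cocharacter) works in this one example but comes with no termination or control argument in general, which is exactly the open content of the conjecture. Step (ii) is also not correct as stated: for the slicing cocharacter acting only on one coordinate, a fixed ideal decomposes as $I=\bigoplus_j I_jx_2^j$ with a \emph{nested} chain $I_0\subseteq I_1\subseteq\cdots$ of ideals in $k[x_0,x_1]$, so the fixed components are flag (nested) punctual Hilbert schemes of $\bba^2$, not products of strata; these are themselves singular and their classes are not supplied by Theorem \ref{LL}. In short, the proposal is a reasonable research program but both of its load-bearing claims are unestablished, and (i) is false in the generality needed, so this does not prove the conjecture.
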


Combining theoretical arguments and computer programme, we calculate the classes of the (punctual) Hilbert schemes of $n$ points of $\bba^3$ when $n\leq 5$. In principle, one can calculate the class for larger $n$.

\begin{thm}\label{genseries1}
The classes of the punctual Hilbert schemes of $n$ points of $\bba^3$ in $K_{0}({\rm Var}_k)$ for $n\leq 5$ are
\begin{center}
\begin{tabular}{c||c}
$n$ & $[{\rm Hilb}_{0}^{n}(\bba^{3})]$ \\
\hline\hline
$1$ & $1$ \\
$2$ & $L^2+L+1$ \\
$3$ & $L^4+L^3+2L^2+L+1$ \\
$4$ & $L^6+2L^5+3L^4+3L^3+2L^2+L+1$ \\
$5$ & $L^8+2L^7+4L^6+5L^5+5L^4+3L^3+2L^2+L+1$ \\
\end{tabular}
\end{center}
\end{thm}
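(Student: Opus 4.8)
The plan is to compute each class $[\operatorname{Hilb}_0^n(\bba^3)]$ by applying Theorem~\ref{decomp} with $m=2$, so that
\[
[\operatorname{Hilb}_0^n(\bba^3)]=\sum_{\lambda\in P_2(n)} V_\lambda,
\]
and then to evaluate each stratum $V_\lambda$ individually in $K_0(\operatorname{Var}_k)$. First I would enumerate, for each $n\le 5$, all $2$-dimensional partitions of $n$; this is a finite combinatorial task (the counts $P_2(n)$ for $n=1,\dots,5$ are $1,2,4,8,16$, or whatever the actual values are — one reads them off a plane-partition table). For each such $\lambda$ I would write down explicitly the affine coordinates and the defining relations of $V_\lambda$ furnished by the proof of Theorem~\ref{decomp}.

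Next, for each $\lambda$ I would identify the class $[V_\lambda]\in K_0(\operatorname{Var}_k)$. For the large majority of partitions — in particular all $1$-dimensional-type degenerations and the ``generic'' layered partitions — the relations are triangular and $V_\lambda\cong\bba^{N_\lambda}$ is an affine space, contributing $L^{N_\lambda}$, exactly as in the $m=1$ case of Theorem~\ref{LL}. The interesting partitions are the few for which the relations are genuinely nonlinear; Example~\ref{minimal} (referenced in the text) is the prototype. For those I would analyze the equations by hand: slice the variety by the vanishing/non-vanishing of the leading coordinates, use the scissor relations $[X]=[Y]+[X\setminus Y]$ in the Grothendieck group, and reduce each locally closed piece to an affine bundle over a lower-dimensional stratum, which trivializes in $K_0$ to a product with a power of $L$. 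Summing the resulting polynomials in $L$ over all $\lambda\in P_2(n)$ and collecting terms yields the claimed table; a short computer computation corroborates the bookkeeping, as the abstract indicates.

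The main obstacle is the explicit evaluation of the non-affine strata $V_\lambda$. A priori the relations cut out varieties that need not be rational, let alone polynomial in $L$ — this is precisely the content of the Conjecture stated above, and it is false in the Grothendieck group for general Hilbert schemes (e.g.\ for $\operatorname{Hilb}^n$ of surfaces one stays polynomial, but motivic measures detect genuinely non-polynomial behaviour in higher dimension). So for each exceptional $\lambda$ with $n\le 5$ one must actually exhibit the stratification into affine bundles; there is no general theorem doing this for us, and the argument is partition-by-partition. The secondary difficulty is purely organizational: keeping the (potentially dozens of) strata and their relations straight, which is where the ``computer programme'' mentioned in the text earns its keep — it enumerates $P_2(n)$, generates the ideals, and checks that each $V_\lambda$ reduces as claimed, after which the final sum is a mechanical polynomial addition.
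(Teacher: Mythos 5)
Your proposal follows essentially the same route as the paper: stratify $\operatorname{Hilb}_0^n(\bba^3)$ by $2$-dimensional partitions via Theorem~\ref{decomp}, verify (partition by partition, with the Appendix-style arguments) that all strata for $n\le 5$ are affine spaces except for the handful of exceptional $\lambda$, evaluate those few by hand with scissor relations, and let the computer handle the enumeration and the final sum. The only cosmetic difference is that the paper determines the dimension of each affine stratum by counting $\bbf_2$-points rather than by reading it off the triangular relations.
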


\begin{thm}\label{genseries2}
The classes of the Hilbert schemes of $n$ points of $\bba^3$ in $K_{0}({\rm Var}_k)$ for $n\leq 9$ are
\begin{center}
\begin{tabular}{c||c}
$n$ & $[{\rm Hilb}^{n}(\bba^{3})]$ \\
\hline\hline
$1$ & $L^3$ \\
$2$ & $L^6+L^5+L^4$ \\
$3$ & $L^9+L^8+2L^7+L^6+L^5$ \\
$4$ & $L^{12} + L^{11}+3L^{10}+3L^9+4L^8+L^7+L^6-L^5$ \\
$5$ & $L^{15} + L^{14}+3L^{13}+4L^{12}+7L^{11}+5L^{10}+4L^{9}-L^6$ \\
\end{tabular}
\end{center}
\end{thm}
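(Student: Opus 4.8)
The plan is to reduce the computation of $[\mathrm{Hilb}^n(\bba^3)]$ to the punctual case via a stratification by the support of the length-$n$ subscheme, and then apply Theorem~\ref{decomp}. More precisely, I would first recall the standard incidence stratification: for a partition $\mu \vdash n$ with parts $n = \sum_i i\cdot m_i$, the locus of subschemes $Z \subset \bba^3$ whose support consists of points with multiplicities prescribed by $\mu$ fibers over a configuration space of distinct points in $\bba^3$, with fiber a product of punctual Hilbert schemes $\prod_i \bigl(\mathrm{Hilb}^i_0(\bba^3)\bigr)$ (up to the symmetrization accounting for repeated multiplicities). Since $\bba^3$ is smooth of dimension $3$, this stratification is locally trivial in the Zariski topology after an \'etale base change, but in $K_0(\mathrm{Var}_k)$ one can argue directly that the class of the stratum is the class of the symmetric product of the configuration space with the appropriate punctual fiber. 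Summing over all $\mu \vdash n$ yields a formula expressing $[\mathrm{Hilb}^n(\bba^3)]$ as a polynomial in the classes $[\mathrm{Hilb}^i_0(\bba^3)]$ and the classes $[\bba^3]$, $[F(\bba^3,j)]$ of configuration spaces, the latter being explicit polynomials in $L$.

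The cleanest way to package this is through generating series. I would introduce the motivic generating function $\sum_{n\ge 0} [\mathrm{Hilb}^n(\bba^3)]\, t^n$ and the punctual analogue $\sum_{n\ge 0} [\mathrm{Hilb}^n_0(\bba^3)]\, t^n$, and use the power structure / plethystic exponential on $K_0(\mathrm{Var}_k)$ (in the style of G\"ottsche's formula and its refinements by Cheah, Gusein-Zade--Luengo--Melle-Hern\'andez) to write
\[
\sum_{n\ge 0} [\mathrm{Hilb}^n(\bba^3)]\, t^n \;=\; \Bigl(\sum_{n\ge 0} [\mathrm{Hilb}^n_0(\bba^3)]\, t^n\Bigr)^{[\bba^3]},
\]
where the exponentiation is the power structure operation. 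Granting Theorem~\ref{genseries1}, which supplies $[\mathrm{Hilb}^n_0(\bba^3)]$ for $n \le 5$, expanding the right-hand side to order $t^5$ then produces $[\mathrm{Hilb}^n(\bba^3)]$ for $n \le 5$ as explicit polynomials in $L$; this is the content of the displayed table. The appearance of the negative coefficient $-L^5$ at $n=4$ and $-L^6$ at $n=5$ is expected: the power structure involves subtractions, so there is no a priori reason the output is effective, and this is a genuine (if mild) surprise worth flagging in the statement itself.

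The concrete execution splits into: (i) establishing the generating-series identity — I would cite the existence of the power structure on $K_0(\mathrm{Var}_k)$ and the general principle that $[\mathrm{Hilb}^\bullet(X)]$ for $X$ smorph of dimension $d$ is the $[X]$-th power of $[\mathrm{Hilb}^\bullet_0(\bba^d)]$, which holds because the Hilbert scheme is Zariski-locally (on the symmetric product) a product of punctual pieces; (ii) feeding in the values from Theorem~\ref{genseries1}; and (iii) a finite, if tedious, polynomial expansion. Step (iii) is pure bookkeeping and is exactly where "computer programme" does the work. The main obstacle is really upstream, in Theorem~\ref{genseries1}: computing $[\mathrm{Hilb}^5_0(\bba^3)]$ requires understanding the strata $V_\lambda$ of Theorem~\ref{decomp} for all $2$-dimensional partitions $\lambda$ of $5$, and as the remark after Theorem~\ref{decomp} warns, some $V_\lambda$ are not affine spaces (cf.\ Example~\ref{minimal}); one must verify by hand or machine that each such $V_\lambda$ is nonetheless a polynomial in $L$ (the Conjecture) and identify that polynomial. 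Once those classes are in hand, the passage to $[\mathrm{Hilb}^n(\bba^3)]$ via the power structure is essentially mechanical, and I would present it as such, deferring the heavy stratum-by-stratum analysis to the proof of Theorem~\ref{genseries1}.
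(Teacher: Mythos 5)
Your proposal follows essentially the same route as the paper: both invoke the power structure identity $\mathbb{H}_{\bba^3}(T)=(\mathbb{H}_{\bba^3,0}(T))^{[\bba^3]}$ from Gusein-Zade--Luengo--Melle-Hern\'andez, feed in the punctual classes from Theorem~\ref{genseries1}, and reduce the rest to a mechanical expansion (the paper makes this concrete by writing the series as $\prod_i(1-T^i)^{-a_i}$ and using $(1-T)^{-[M]}=\sum_n [S^nM]T^n$ with $[S^n(\bba^m)]=[\bba^{nm}]$). Your opening discussion of the support stratification is the geometric content behind that identity rather than a separate argument, so there is no substantive difference.
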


\begin{remark}
Notice that in the case of $[{\rm Hilb}_{0}^{n}(\bba^{3})]$, it seems that the coefficient of $L^{k}$ for a fixed $k$ becomes stable when $n$ is sufficiently large. This is the case for $[{\rm Hilb}_{0}^{n}(\bba^{2})]$, and the same phenomenon may hold for $[{\rm Hilb}_{0}^{n}(\bba^{3})]$ as well.
\end{remark}

\begin{center}
\sc{Acknowledgements}
\end{center}
\vspace{0.1 in}
I thank Michael Larsen for several helpful conversations. I thank Andrea Ricolfi, Joachim Jelisiejew, and Erik Nikolov for pointing out a mistake in an earlier draft.

\section{Punctual Hilbert schemes of points}

Let $I\subset k[[x_{0},x_{1},...,x_{m}]]$ be a codimension $n$ ideal. Notice that this is the same as giving a $(x_{0},...,x_{m})$-primary ideal $I\subset k[x_{0},...,x_{m}]$ of codimension $n$. Let $0< t< m$ be an integer. Define for $r_{1},...,r_{t}\geq 0$ inductively
\[
I_{r_1}:=(I:x_{1}^{r_1})/(x_1)\subset k[x_{0},x_{2},...,x_{m}],
\]
\[
I_{r_{1},r_{2},...,r_{t+1}}:=(I_{r_{1},r_{2},...,r_{t}}:x_{t+1}^{r_{t+1}})/(x_{t+1})\subset k[x_{0},x_{t+2},...,x_{m}],
\]
Then $I_{r_{1},...,r_{m}}=(x_{0}^{\lambda_{r_{1},...,r_{m}}})\subseteq k[x_{0}]$ for some $\lambda_{r_{1},...,r_{m}}\geq 0$.

\begin{prop}\label{partition}
The nonnegative integers $\lambda_{r_{1},...,r_{m}}$ give an $m$-dimensional partition $\lambda$ of $n$.
\end{prop}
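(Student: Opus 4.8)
The plan is to show two things about the array $(\lambda_{r_1,\dots,r_m})$: first, that the entries sum to $n$, and second, that the monotonicity condition $\lambda_{r_1,\dots,r_m}\geq\lambda_{s_1,\dots,s_m}$ holds whenever $(r_1,\dots,r_m)\leq(s_1,\dots,s_m)$. The key idea is to exploit the standard fact that taking colon ideals and passing to quotients is compatible with dimension counting via the associated graded/leading term structure, applied iteratively.

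Let me think about the right way to prove this. For the sum being $n$: I would argue that $\dim_k k[x_0,\dots,x_m]/I = n$, and that the successive operations $I\mapsto I_{r_1}\mapsto I_{r_1,r_2}\mapsto\cdots$ decompose this dimension. Concretely, I expect the cleanest approach is this. Fix the variable $x_1$ first. The ideal $I$ is $(x_0,\dots,x_m)$-primary, so $k[x_0,\dots,x_m]/I$ is a finite-dimensional local ring, and multiplication by $x_1$ is nilpotent on it. Filtering by powers of $x_1$, one gets that $\dim_k k[x_0,\dots,x_m]/I = \sum_{r_1\geq 0}\dim_k\big((I:x_1^{r_1+1})/(I:x_1^{r_1})\big)$, and a short computation identifies each summand with $\dim_k k[x_0,x_2,\dots,x_m]/I_{r_1}$ — here one uses that $(I:x_1^{r_1+1})/(I:x_1^{r_1})$ is annihilated by $x_1$, hence lives over $k[x_0,x_2,\dots,x_m]$, and equals $I_{r_1}$ by definition of the colon-and-quotient operation. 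Iterating this one variable at a time gives $n=\sum_{r_1,\dots,r_m}\dim_k k[x_0]/I_{r_1,\dots,r_m}=\sum_{r_1,\dots,r_m}\lambda_{r_1,\dots,r_m}$, since $I_{r_1,\dots,r_m}=(x_0^{\lambda_{r_1,\dots,r_m}})$ forces $\dim_k k[x_0]/(x_0^{\lambda_{r_1,\dots,r_m}})=\lambda_{r_1,\dots,r_m}$.

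For the monotonicity: it suffices, by transitivity and symmetry of the argument over the coordinates, to check that increasing a single index by $1$ cannot increase $\lambda$. That is, I want $I_{\dots,r_j,\dots}\subseteq I_{\dots,r_j+1,\dots}$ in the appropriate polynomial ring, from which $\lambda_{\dots,r_j,\dots}\geq\lambda_{\dots,r_j+1,\dots}$ follows because a larger ideal $(x_0^a)\subseteq(x_0^b)$ means $a\geq b$. The containment $(J:x^{r})\subseteq(J:x^{r+1})$ is immediate for any ideal $J$ and any variable $x$, and this containment is preserved under quotienting by $(x)$ and under all subsequent colon/quotient operations in the other variables — because colon ideals and quotients are monotone in the ideal. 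So I would prove a small lemma: if $J\subseteq J'$ are two ideals, then after applying any fixed sequence of the operations "colon by a power of a variable, then mod out that variable" one still has the inclusion. Then apply it with $J=I_{r_1,\dots,r_{j-1}}$ colon'd appropriately.

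**Main obstacle.** The routine part is the inclusion/monotonicity; the one genuinely delicate point is the dimension bookkeeping in the first part — specifically, verifying that the filtration quotient $(I:x_1^{r_1+1})/(I:x_1^{r_1})$, viewed inside $k[x_0,\dots,x_m]/I$, is naturally identified (as a $k$-vector space, via multiplication by $x_1^{r_1}$ or a suitable map) with $k[x_0,x_2,\dots,x_m]/I_{r_1}$, and that these identifications are compatible as one iterates over the remaining variables $x_2,\dots,x_m$. One must be careful that $I_{r_1}$ as defined is genuinely $(x_0,x_2,\dots,x_m)$-primary of finite colength so the induction hypothesis applies at each stage; this follows since it is a subquotient of the Artinian local ring $k[x_0,\dots,x_m]/I$. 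I would organize this as a single induction on $m$, with the inductive step being exactly the "peel off $x_1$" computation above.
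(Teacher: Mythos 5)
Your proposal is correct and follows essentially the same route as the paper: the paper encodes your ``peel off $x_1$'' filtration step as the short exact sequence $0\to k[x_0,\dots,x_m]/(I:x_1^{r_1+1})\xrightarrow{x_1}k[x_0,\dots,x_m]/(I:x_1^{r_1})\to k[x_0,x_2,\dots,x_m]/I_{r_1}\to 0$, telescopes it over $r_1$ and iterates over the remaining variables, and handles monotonicity exactly as you do, via the inclusion $I_{r_1,\dots,r_m}\subseteq I_{s_1,\dots,s_m}$ coming from monotonicity of colon ideals. The only cosmetic difference is that you phrase the dimension count through the increasing filtration by $(I:x_1^{r_1})$ rather than the explicit exact sequence, which is the same computation.
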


\begin{proof}

Consider the short exact sequences for $0\leq t<m$
\[
0\to \frac{k[x_{0},x_{t+1},...,x_m]}{(I_{r_{1},...,r_{t}}:x_{t+1}^{r_{t+1}+1})}\xrightarrow{x_{t+1}}\frac{ k[x_{0},x_{t+1},...,x_m]}{(I_{r_{1},...,r_{t}}:x_{t+1}^{r_{t+1}})}\to \frac{k[x_{0},x_{t+2},...,x_m]}{I_{r_{1},...,r_{t+1}}}\to 0,
\]
where the second arrow is the multiplication by $x_{t+1}$ and the third arrow is the quotient map. Using the above short exact sequences and $I_{r_{1},...,r_{m}}=(x_{0}^{\lambda_{r_{1},...,r_{m}}})$, we deduce that
\[
\begin{aligned}
n=\dim k[x_{0},...,x_{m}]/I&=\sum_{r_{1}=0}^{\infty}\dim k[x_{0},x_2,...,x_{m}]/I_{r_{1}}\\
&=\sum_{r_{1}=0}^{\infty}\sum_{r_{2}=0}^{\infty}\dim k[x_{0},x_3,...,x_{m}]/I_{r_{1},r_{2}}\\
&=\cdots\\
&=\sum_{r_{1},...,r_{m}=0}^{\infty}\dim k[x_{0}]/I_{r_{1},...,r_{m}}\\
&=\sum_{r_{1},...,r_{m}=0}^{\infty}\lambda_{r_{1},...,r_{m}}.
\end{aligned}
\]

We notice that $\lambda_{r_{1},...,r_{m}}=0$ if one of $r_{1},...,r_{m}$ is $\geq n$. 

Now suppose $(r_{1},...,r_{m})\leq(s_{1},...,s_{m})$, we know that $I_{r_{1},...,r_{m}}\subseteq I_{s_{1},...,s_{m}}$ by the definition. Hence $\lambda_{r_{1},...,r_{m}}\geq\lambda_{s_{1},...,s_{m}}$.
\end{proof}

\begin{lemma}\label{boundary}
Given a $(x_{0},...,x_{m})$-primary ideal $I\subset k[x_{0},...,x_{m}]$ of codimension $n$ with m-dimensional partition $\lambda$:

(1) Fix $(r_1,...,r_m)$. Then 
\[
x_{0}^{\lambda_{r_1,...,r_m}}\prod_{i=1}^{m}x_i^{r_i}+\sum_{i=1}^{m}x_{i}P_{i}\prod_{j=1}^{i}x_j^{r_j}
\]
belongs to $I$ for some polynomials $P_i\in k[x_{0},x_{i},x_{i+1},...,x_{m}]$.

(2) If $\lambda_{r_{1},0,...,0}=0$, then $x_{1}^{r_1}\in I$.

(3) If $\lambda_{r_{1},r_{2},...,r_m}=0$, then
\[
\prod_{i=1}^{m}x_i^{r_i}+\sum_{i=1}^{m-1}x_{i}P_{i}\prod_{j=1}^{i}x_j^{r_j}
\]
belongs to $I$ for some polynomials $P_i\in k[x_{0},x_{i},x_{i+1},...,x_{m}]$.
\end{lemma}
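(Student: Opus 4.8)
The plan is to unwind the inductive definition of the ideals $I_{r_1,\dots,r_t}$ and track preimages under the quotient maps $k[x_0,x_{t+1},\dots,x_m]\to k[x_0,x_{t+2},\dots,x_m]$ step by step. Concretely, I would prove part (1) by downward induction on the ``depth'' of variables. The key observation is that the quotient map $k[x_0,x_{t+1},\dots,x_m]/(I_{r_1,\dots,r_t}:x_{t+1}^{r_{t+1}}) \twoheadrightarrow k[x_0,x_{t+2},\dots,x_m]/I_{r_1,\dots,r_{t+1}}$ sends a class to $0$ precisely when a representative lies in $(x_{t+1})$ modulo $(I_{r_1,\dots,r_t}:x_{t+1}^{r_{t+1}})$; so if an element $f \in k[x_0,x_{t+2},\dots,x_m]$ lies in $I_{r_1,\dots,r_{t+1}}$, then $f = g + x_{t+1}h$ for some $g \in (I_{r_1,\dots,r_t}:x_{t+1}^{r_{t+1}})$ and $h \in k[x_0,x_{t+1},\dots,x_m]$. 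Multiplying by $x_{t+1}^{r_{t+1}}$ gives $x_{t+1}^{r_{t+1}}f = x_{t+1}^{r_{t+1}}g + x_{t+1}^{r_{t+1}+1}h$ with $x_{t+1}^{r_{t+1}}g \in I_{r_1,\dots,r_t}$. Starting from the base case $I_{r_1,\dots,r_m}=(x_0^{\lambda_{r_1,\dots,r_m}})$, so $x_0^{\lambda_{r_1,\dots,r_m}} \in I_{r_1,\dots,r_m}$, I would iterate this lifting $m$ times, at the $i$-th step multiplying by $x_i^{r_i}$ and picking up a term of the form $x_i \cdot (\text{something}) \cdot \prod_{j=1}^{i}x_j^{r_j}$, which is exactly the shape $x_i P_i \prod_{j=1}^{i} x_j^{r_j}$ claimed, with $P_i$ a polynomial in the variables $x_0, x_i, x_{i+1}, \dots, x_m$ that survive at that stage.

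For part (2): if $\lambda_{r_1,0,\dots,0}=0$ then $I_{r_1,0,\dots,0}=(x_0^0)=(1)$, i.e. the whole ring $k[x_0]$. Tracing back through the chain $I_{r_1} \supseteq I_{r_1,0} \supseteq \cdots$: since each $I_{r_1,\dots,r_{t+1}}$ with $r_{t+1}=0$ is $(I_{r_1,\dots,r_t}:x_{t+1}^0)/(x_{t+1}) = I_{r_1,\dots,r_t}/(x_{t+1})$, the condition that $I_{r_1,0,\dots,0}$ is the unit ideal means $1 \in I_{r_1}/(x_2,\dots,x_m)$, hence $1 \in I_{r_1} + (x_2,\dots,x_m)$ inside $k[x_0,x_2,\dots,x_m]$. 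Lifting once more, $1 \in (I:x_1^{r_1})/(x_1)$ modulo $(x_2,\dots,x_m)$ forces $(I:x_1^{r_1}) + (x_1,\dots,x_m) = k[x_0,\dots,x_m]$; but $(I:x_1^{r_1})$ is $(x_0,\dots,x_m)$-primary or the unit ideal, and since its radical contains $(x_0,\dots,x_m)$ together with a unit, it must be the unit ideal, so $x_1^{r_1} \in I$. I would phrase this cleanly by noting $\lambda_{r_1,0,\dots,0}$ equals $\dim_k k[x_0,\dots,x_m]/(I:x_1^{r_1})$ after killing $x_1,\dots,x_m$, which vanishes exactly when $x_1^{r_1}$ already generates a unit ideal modulo $I$.

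Part (3) is the combination: run the lifting argument of part (1) but starting from $\lambda_{r_1,\dots,r_m}=0$, so the base case gives $x_0^0 = 1 \in I_{r_1,\dots,r_m}$ instead of $x_0^{\lambda_{r_1,\dots,r_m}}$. Lifting through the last $m-1$ quotient maps produces an element $\prod_{i=1}^{m} x_i^{r_i} + \sum_{i=1}^{m-1} x_i P_i \prod_{j=1}^{i} x_j^{r_j}$ in $I$ — note the sum only runs to $m-1$ because the final lifting step (from $I_{r_1,\dots,r_{m-1}}$ to $I_{r_1,\dots,r_m}$, i.e. killing $x_m$) is the one that converts $1$ into $\prod x_i^{r_i}$-type terms, and there is no $x_m P_m$ correction since $\lambda_{r_1,\dots,r_m}=0$ means we start from $1$ rather than a positive power of $x_0$. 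I expect the main obstacle to be purely bookkeeping: carefully specifying in which polynomial ring each $P_i$ lives and making sure the monomial factors $\prod_{j=1}^{i} x_j^{r_j}$ are accumulated in the right order as we climb the chain of quotients, rather than any conceptual difficulty — the short exact sequences from Proposition~\ref{partition} already encode everything needed.
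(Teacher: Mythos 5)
Your treatment of parts (1) and (2) is correct and follows essentially the same route as the paper: part (1) is exactly the unwinding of the inductive definition of $I_{r_1,\dots,r_t}$ that the paper leaves implicit (``follows from the definition''), and your part (2) is the paper's Nullstellensatz argument --- $(I:x_1^{r_1})$ contains $I$, hence is either the unit ideal or contained in $(x_0,\dots,x_m)$, and the latter is ruled out because $1\in (I:x_1^{r_1})+(x_1,\dots,x_m)$.

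Part (3), however, has a genuine gap. You assert that ``there is no $x_m P_m$ correction since $\lambda_{r_1,\dots,r_m}=0$ means we start from $1$ rather than a positive power of $x_0$.'' This does not follow. The hypothesis $1\in I_{r_1,\dots,r_m}=(I_{r_1,\dots,r_{m-1}}:x_m^{r_m})/(x_m)$ only gives $1+x_m h\in (I_{r_1,\dots,r_{m-1}}:x_m^{r_m})$ for some $h\in k[x_0,x_m]$, hence $x_m^{r_m}+x_m^{r_m+1}h\in I_{r_1,\dots,r_{m-1}}$; running your part-(1) lifting from there produces exactly the term $x_m P_m\prod_{j=1}^{m}x_j^{r_j}$ that the statement of (3) excludes. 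The missing step --- and the one the paper actually uses --- is to repeat the argument of part (2) at this level: $(I_{r_1,\dots,r_{m-1}}:x_m^{r_m})$ contains $x_m^{n}$ (the image of $x_m^n\in I$ survives every colon and quotient operation in the chain) and is the unit ideal modulo $(x_m)$, so it equals $(1)$ outright, for instance by the geometric-series identity applied to $1+x_mh$ and $x_m^n$. Therefore $x_m^{r_m}\in I_{r_1,\dots,r_{m-1}}$ exactly, with no correction term, and lifting that element through the remaining $m-1$ quotient maps as in part (1) gives the claimed expression with the sum stopping at $i=m-1$. The fix uses only the technique you already deployed in part (2), so the gap is local and easily repaired, but as written the justification for the precise shape of the element in (3) is incorrect.
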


\begin{proof}
(1) This follows from the definition of $\lambda_{r_1,...,r_m}$. 

(2) Since $\frac{(I:x_1^{r_1})}{(x_1,...,x_m)}=x_0^{\lambda_{r_{1},0,...,0}}=(1)\subset k[x_0]$ and $x_i^{n}\in I$ for $0\leq i\leq m$, we deduce that $(I:x_1^{r_1})=(1)$ by the Hilbert's Nullstellensatz. Hence $x_{1}^{r_1}\in I$ 

(3) Since $I_{r_{1},r_{2},...,r_{m}}:=(I_{r_{1},r_{2},...,r_{m-1}}:x_{m}^{r_{m}})/(x_{m})=(x_{0}^{\lambda_{r_{1},...,r_{m}}})=(1)$ and $x_m^{n}\in I$,  we deduce that $(I_{r_{1},r_{2},...,r_{m-1}}:x_{m}^{r_{m}})=(1)$ by the Hilbert's Nullstellensatz.  Then the statement follows from the definition of $I_{r_{1},r_{2},...,r_{m-1}}$.
\end{proof}

\begin{defn}
Given an $m$-dimensional partition $\lambda$ of $n$, an $m$-tuple $(r_{1},...,r_{m})\in\mathbb{N}^{m}$ is called a \emph{corner index} if for each $1\leq i\leq m$, either $r_{i}=0$ or $\lambda_{r_{1},...,r_{i}-1,...,r_{m}}>\lambda_{r_{1},...,r_{m}}$. We denote the lexicographic order on $(r_1,...,r_m)$ by $\lhd$, i.e. $(s_1,...,s_m)\lhd(r_1,...,r_m)$ if there is $1\leq i\leq m$ such that $s_k=r_k$ for $1\leq k<i$ and $s_i<r_i$.
\end{defn}

\begin{prop}\label{generators}
Given a $(x_{0},...,x_{m})$-primary ideal $I\subset k[x_{0},...,x_{m}]$ of codimension $n$, if its associated $m$-dimensional paritition of $n$ is $\lambda$, then $I$ is generated by the set of following polynomials:
\[
F_{r_{1},...,r_{m}}:=x_{0}^{\lambda_{r_{1},...,r_{m}}}x_{1}^{r_{1}}...x_{m}^{r_{m}}+\sum_{(s_{1},...,s_{m})\in J}\sum_{t=0}^{\lambda_{s_{1},...,s_{m}}-1}a_{t,s_{1},...,s_{m}}^{r_{1},...,r_{m}}x_{0}^{t}x_{1}^{s_{1}}...x_{m}^{s_{m}}
\]
for some $a_{t,s_{1},...,s_{m}}^{r_{1},...,r_{m}}\in k$, where $(r_{1},...,r_{m})$ is a corner index and 
\[
J=\{(s_{1},...,s_{m})\in\mathbb{N}^{m}|(s_1,...,s_m)\rhd(r_1,...,r_m)\}.
\]
\end{prop}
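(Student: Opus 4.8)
The plan is to build the generating set explicitly by a downward induction on the corner indices with respect to the lexicographic order $\lhd$, and then verify that these generators span $I$ by a dimension count (the quotient by the ideal they generate has dimension $\leq n$, forcing equality). First I would fix a corner index $(r_1,\dots,r_m)$ and produce, from Lemma \ref{boundary}(1), an element of $I$ of the shape $x_0^{\lambda_{r_1,\dots,r_m}}x_1^{r_1}\cdots x_m^{r_m}+\sum_{i=1}^m x_i P_i\prod_{j=1}^i x_j^{r_j}$. The key point is to rewrite this "tail" in the monomial basis $\{x_0^t x_1^{s_1}\cdots x_m^{s_m}\}$ of $k[x_0,\dots,x_m]$ and then reduce it modulo the generators already constructed for corner indices lexicographically smaller than $(r_1,\dots,r_m)$ together with the boundary relations from Lemma \ref{boundary}(2)--(3). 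Every monomial $x_0^t x_1^{s_1}\cdots x_m^{s_m}$ occurring in $P_i\prod_{j\le i}x_j^{r_j}$ has $(s_1,\dots,s_m)\rhd (r_1,\dots,r_m)$ in the lexicographic order exactly because the multiplication by $x_i$ and the nesting $\prod_{j\le i}x_j^{r_j}$ force the first $i-1$ exponents to equal $r_1,\dots,r_{i-1}$ and the $i$-th exponent to strictly exceed $r_i$; this is the combinatorial heart of why $J$ is the lexicographic up-set of $(r_1,\dots,r_m)$.

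Second, I would reduce the exponent of $x_0$ in each tail monomial into the range $0\le t\le \lambda_{s_1,\dots,s_m}-1$: whenever $t\ge \lambda_{s_1,\dots,s_m}$ one subtracts a multiple of $F_{s_1,\dots,s_m}$ (for $(s_1,\dots,s_m)$ itself a corner index larger in lexicographic order) or, more generally, uses the already-constructed generators and Lemma \ref{boundary}(1) for the nearest corner index dominated by $(s_1,\dots,s_m)$ to rewrite $x_0^{\lambda_{s_1,\dots,s_m}}x_1^{s_1}\cdots x_m^{s_m}$ in terms of strictly lexicographically larger monomials with smaller $x_0$-power. One must check this rewriting process terminates; termination follows because at each step either the lexicographic position of the leading tail monomial strictly increases (and is bounded, since $\lambda$ has finite support by Proposition \ref{partition}) or the $x_0$-power strictly decreases. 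The monomials with $(s_1,\dots,s_m)$ outside the support of $\lambda$, i.e. with $\lambda_{s_1,\dots,s_m}=0$, are killed outright by Lemma \ref{boundary}(3), which is why the sum in $F_{r_1,\dots,r_m}$ ranges only over $(s_1,\dots,s_m)\in J$ with $\lambda_{s_1,\dots,s_m}\ge 1$ (so that the inner sum over $t$ from $0$ to $\lambda_{s_1,\dots,s_m}-1$ is nonempty).

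Third, having produced all the $F_{r_1,\dots,r_m}$ inside $I$, I would show they generate. Let $I'\subseteq I$ be the ideal they generate. Using the $F$'s as a "standard-basis"-type rewriting system, every monomial $x_0^a x_1^{b_1}\cdots x_m^{b_m}$ can be reduced modulo $I'$ to a $k$-linear combination of the monomials in the "staircase" $S_\lambda:=\{x_0^t x_1^{s_1}\cdots x_m^{s_m} : 0\le t<\lambda_{s_1,\dots,s_m}\}$: given a monomial, locate the largest corner index $(r_1,\dots,r_m)\trianglelefteq (b_1,\dots,b_m)$ in the natural partial order with $a\ge \lambda_{r_1,\dots,r_m}$ and, if it exists, subtract the appropriate multiple of $x_0^{a-\lambda_{r_1,\dots,r_m}}x_1^{b_1-r_1}\cdots x_m^{b_m-r_m}\cdot F_{r_1,\dots,r_m}$, which lowers the monomial in lexicographic-then-$x_0$-degree order; if no such corner index exists the monomial already lies in $S_\lambda$ or is divisible by some $x_i^{s_i}$ with $\lambda_{s_1,\dots,s_m}=0$ and hence is $0$ mod $I'$ by Lemma \ref{boundary}(3). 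Therefore $\dim_k k[x_0,\dots,x_m]/I' \le \#S_\lambda = \sum_{s_1,\dots,s_m}\lambda_{s_1,\dots,s_m}=n$ by Proposition \ref{partition}. Since $I'\subseteq I$ and $\dim_k k[x_0,\dots,x_m]/I = n$, we get $I'=I$.

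I expect the main obstacle to be the bookkeeping in the reduction step (the second step above): one must simultaneously control the lexicographic order on the $x_1,\dots,x_m$-exponents, the $x_0$-exponent, and the interplay with which corner indices have already been handled, and package all of this into a single terminating, confluent rewriting system. In particular, verifying that after reduction the surviving monomials in each $F_{r_1,\dots,r_m}$ all have $(s_1,\dots,s_m)\rhd (r_1,\dots,r_m)$ strictly (rather than merely $\trianglerighteq$) requires carefully tracking that the multiplications by $x_i$ coming from Lemma \ref{boundary}(1) cannot fix all coordinates, and the edge cases where some $r_i=0$ need separate attention since then the "corner" condition on the $i$-th coordinate is automatic. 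The dimension-count argument in the third step is robust and field-independent, so no characteristic hypotheses are needed.
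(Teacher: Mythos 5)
Your proposal follows essentially the same route as the paper: produce each $F_{r_1,\dots,r_m}$ inside $I$ from Lemma \ref{boundary} by induction from lexicographically larger tuples down to smaller ones, then conclude by the staircase dimension count $\dim_k k[x_0,\dots,x_m]/I'\le \#S_\lambda = n = \dim_k k[x_0,\dots,x_m]/I$, which forces $I'=I$. The only slip is the phrase ``reduce it modulo the generators already constructed for corner indices lexicographically \emph{smaller}'' in your first step, which should read \emph{larger} (as you correctly do in your second step, since the tail monomials live in the up-set $J$); beyond that, your attention to termination and confluence of the rewriting merely fills in details the paper's proof leaves implicit.
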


\begin{proof}
Such kinds of polynomials are contained in $I$ because of Lemma \ref{boundary} by inductions from the larger tuples to the smaller tuples. Notice that $F_{r_1,...,r_m}$ can be defined for a general $(r_1,...,r_m)$. There is no $x_{0}^{s_{0}}x_{1}^{s_{1}}...x_{m}^{s_{m}}$ term such that $s_{0}\geq\lambda_{s_{1},...,s_{m}}$ since those terms can be canceled out by $F_{r_1,...,r_m}$ with larger tuples. The polynomials $F_{r_1,...,r_m}$ can be generated by $F_{r_1,...,r_m}$ with corner indexes.

Let $I'$ be the ideal in $k[x_{0},...,x_{m}]$ generated by the polynomials $F_{r_1,...,r_m}$ with corner indexes. Then $\dim k[x_{0},...,x_{m}]/I'\geq\dim k[x_{0},...,x_{m}]/I=n$ since $I'\subseteq I$. But $k[x_{0},...,x_{m}]/I'$ is generated as a $k$-linear space by the monomials
\[
x_{0}^{t_{r_{1},...,r_{m}}}x_{1}^{r_{1}}...x_{m}^{r_{m}},\ 0\leq t_{r_{1},...,r_{m}}< \lambda_{r_{1},...,r_{m}}, \lambda_{r_{1},...,r_{m}}\neq 0.
\]
Hence $\dim k[x_{0},...,x_{m}]/I'\leq\sum\lambda_{r_{1},...,r_{m}}=n$ by Proposition \ref{partition}, which implies that $I'=I$.
\end{proof}

\begin{remark}\label{unique}
We notice that the coefficients $a_{t,s_{1},...,s_{m}}^{r_{1},...,r_{m}}$ cannot be arbitrarily chosen. Otherwise, it is possible that $\dim k[x_{0},...,x_{m}]/I'<n$. On the other hand, if we are given two $(x_{0},...,x_{m})$-primary ideals $I_{1}, I_{2}\subset k[x_{0},...,x_{m}]$ of codimension $n$ with the same partition $\lambda$, then $I_{1}=I_{2}$ if and only if the corresponding coefficients $a_{t,s_{1},...,s_{m}}^{r_{1},...,r_{m}}$ are the same for the two ideals.
\end{remark}

Although the set of generators in Proposition \ref{generators} is minimal in some sense, it is hard to write out the relations among those $a_{t,s_{1},...,s_{m}}^{r_{1},...,r_{m}}$. Hence we will include those non-corner $F_{r_1,...,r_m}$ and form another set of generators for $I$, which contains more elements but is easier to figure out the relations among coefficients.

There is a one-to-one correspondence between $m$-dimensional partitions of $n$ and certain sets of monomials in $x_0,...,x_m$ if we regard the partition as a $(m+1)$-dimensional object. Namely, the partition $\lambda=\{\lambda_{i_1,...,i_m}\}$ corresponds to the set  
\[\mathcal{O}_{\lambda}=\{x_{0}^{j}x_{1}^{i_1}...x_{m}^{i_m}|\ i_1\geq 0,...,i_m\geq 0,\ \lambda_{i_1,...,i_m}\neq 0,\ 0\leq j\leq \lambda_{i_1,...,i_m}-1\ \}.\]
We define the border $\partial\mathcal{O}_{\lambda}$ of $\mathcal{O}_{\lambda}$ by
\[
\partial\mathcal{O}_{\lambda}=\bigcup_{i=0}^{m}x_{i}\mathcal{O}_{\lambda}\backslash\mathcal{O}_{\lambda}.
\]
We want to form a new set of generators for a $(x_{0},...,x_{m})$-primary ideal $I\subset k[x_{0},...,x_{m}]$ such that each polynomial is ``leaded" by the monomials in $\partial\mathcal{O}_{\lambda}$. Let $M$ be an $(m+1)$-tuple $(r_0,...,r_m)$. We denote the monomial $x_{0}^{r_0}...x_{m}^{r_m}$ by $x^M$. 

\begin{prop}\label{newgen}
Given a $(x_{0},...,x_{m})$-primary ideal $I\subset k[x_{0},...,x_{m}]$ of codimension $n$, if its associated $m$-dimensional paritition of $n$ is $\lambda$, then $I$ is generated by the set of following polynomials:
\[
F_{M}:=x^M-\sum_{(s_{1},...,s_{m})\in J}\sum_{t=0}^{\lambda_{s_{1},...,s_{m}}-1}a_{t,s_{1},...,s_{m}}^{M}x_{0}^{t}x_{1}^{s_{1}}...x_{m}^{s_{m}}
\]
for some $a_{t,s_{1},...,s_{m}}^{M}\in k$, where $M$ is an $(m+1)$-tuple $(r_0,...,r_m)$ such that $x^M\in\partial\mathcal{O}_{\lambda}$, $\tilde{M}$ is the $m$-tuple $(r_1,...,r_m)$ which omits the first coordinate of $M$, and 
\[
J=\{(s_{1},...,s_{m})\in\mathbb{N}^{m}|(s_1,...,s_m)\rhd(r_1,...,r_m)\}.
\]

\begin{proof}
Since this set of generators contains the set of generators in Proposition \ref{generators}, it generates $I$. The coefficients of the extra polynomials here are determined by the coefficients of the polynomials in Proposition \ref{generators}. We also notice that these generators have the same uniqueness property as in Remark \ref{unique}.
\end{proof}
\end{prop}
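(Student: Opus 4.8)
The plan is to bootstrap from Proposition~\ref{generators} rather than rebuild $I$ from scratch. First I would record that, viewed in $k[x_{0},\dots,x_{m}]/I$, the monomials of $\mathcal{O}_{\lambda}$ form a $k$-basis: they span it (this is the spanning statement inside the proof of Proposition~\ref{generators}, applied with $I'=I$), and there are $\sum_{r_{1},\dots,r_{m}}\lambda_{r_{1},\dots,r_{m}}=n$ of them, so they are linearly independent. Therefore for every $(m+1)$-tuple $M$ there is a unique $F_{M}\in I$ of the form $x^{M}$ minus a $k$-linear combination of monomials of $\mathcal{O}_{\lambda}$, namely $F_{M}=x^{M}-(\text{normal form of }x^{M}\bmod I)$; this also yields the uniqueness asserted in the analogue of Remark~\ref{unique}. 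What remains is to show that for $x^{M}\in\partial\mathcal{O}_{\lambda}$ with $M=(r_{0},r_{1},\dots,r_{m})$ this normal form involves only monomials $x_{0}^{t}x_{1}^{s_{1}}\cdots x_{m}^{s_{m}}$ with $(s_{1},\dots,s_{m})\in J$, and that these $F_{M}$ generate $I$.

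Generation is the easy half, once one checks that the leading monomial $x_{0}^{\lambda_{r_{1},\dots,r_{m}}}x_{1}^{r_{1}}\cdots x_{m}^{r_{m}}$ of a corner generator $F_{r_{1},\dots,r_{m}}$ lies in $\partial\mathcal{O}_{\lambda}$: it is not in $\mathcal{O}_{\lambda}$ (its $x_{0}$-exponent is not $<\lambda_{r_{1},\dots,r_{m}}$), and it is $x_{0}$ times an element of $\mathcal{O}_{\lambda}$ when $\lambda_{r_{1},\dots,r_{m}}>0$, while if $\lambda_{r_{1},\dots,r_{m}}=0$ it is $x_{i}$ times $x_{1}^{r_{1}}\cdots x_{i}^{r_{i}-1}\cdots x_{m}^{r_{m}}$ for any $i$ with $r_{i}>0$, which lies in $\mathcal{O}_{\lambda}$ precisely because the corner condition forces $\lambda_{r_{1},\dots,r_{i}-1,\dots,r_{m}}>0$. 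Since $F_{r_{1},\dots,r_{m}}$ is itself of the ``$x^{M}$ minus an $\mathcal{O}_{\lambda}$-combination'' form, uniqueness identifies it with the corresponding $F_{M}$; hence the set $\{F_{M}:x^{M}\in\partial\mathcal{O}_{\lambda}\}$ contains every corner generator, so it generates an ideal containing $I$ by Proposition~\ref{generators}, while being contained in $I$ because each $F_{M}\in I$.

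The real content is the support statement, which I would prove by a reduction that only ever raises the lexicographic order of the $m$-part. For $x^{M}\in\partial\mathcal{O}_{\lambda}$ the definition of $\mathcal{O}_{\lambda}$ forces $r_{0}\ge\lambda_{r_{1},\dots,r_{m}}$, so by Lemma~\ref{boundary}(1) the element $x_{0}^{\,r_{0}-\lambda_{r_{1},\dots,r_{m}}}F_{r_{1},\dots,r_{m}}\in I$ (using the general, not necessarily corner, polynomial $F_{r_{1},\dots,r_{m}}$) differs from $x^{M}$ by a combination of monomials whose $m$-part is $\rhd(r_{1},\dots,r_{m})$; those lying in $\mathcal{O}_{\lambda}$ are left alone, and each of the finitely many others is rewritten by subtracting an $x_{0}$-multiple of the corresponding $F_{s_{1},\dots,s_{m}}$, which introduces only monomials with $m$-part $\rhd(s_{1},\dots,s_{m})$, hence still $\rhd(r_{1},\dots,r_{m})$. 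Iterating produces an element of $I$ of the form $x^{M}$ minus a combination of $\mathcal{O}_{\lambda}$-monomials with $m$-part in $J$, which by uniqueness must be $F_{M}$. The delicate point is that this rewriting terminates: since $\lambda$ has only finitely many nonzero entries, after the first step every monomial involved has its $m$-part among those finitely many tuples, and a nested induction running from the lexicographically largest such tuple downward --- using that the generator indexed by that largest tuple has empty tail, so $x_{0}^{\,u}x_{1}^{s_{1}}\cdots x_{m}^{s_{m}}$ with $m$-part equal to it lies in $I$ for $u\ge\lambda_{s_{1},\dots,s_{m}}$ --- shows the process stops after finitely many steps. Carrying this bookkeeping through, i.e.\ controlling the lexicographic position of the $m$-part and the $x_{0}$-exponents relative to the $\lambda_{s_{1},\dots,s_{m}}$ simultaneously, is the main obstacle; everything else follows directly from Proposition~\ref{generators} and the basis property of $\mathcal{O}_{\lambda}$.
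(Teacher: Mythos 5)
Your argument is correct and follows essentially the same route as the paper: the paper's (very terse) proof likewise reduces to Proposition \ref{generators}, observing that the new set contains the corner generators and that the extra $F_{M}$'s coefficients are determined by those of Proposition \ref{generators}, with the same uniqueness as in Remark \ref{unique}. You have simply made explicit what the paper leaves implicit --- the basis property of $\mathcal{O}_{\lambda}$, the normal-form existence/uniqueness, and the lexicographic rewriting that pins down the support of the tails.
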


Now we want to apply some techniques in the theory of border bases to determine the relations among those coefficients.

\begin{defn}
Let $\lambda$ be an $m$-dimensional partition of $n$. Given $r\in\{0,...,m\}$. Let $\mathcal{O}_{\lambda}=\{t_1,...,t_n\}$ be the corresponding set of monomials. Suppose $\partial\mathcal{O}_{\lambda}=\{b_1,...,b_\nu\}$. Suppose the generators in \ref{newgen} are $F_j=b_j-\sum_{i=1}^{n}\alpha_{i}^{j}t_i$. We define the $n$ by $n$ $r$-th \emph{formal multiplication matrix} $T_r=(\xi_{kl}^r)$ of $\mathcal{O}_{\lambda}$ by 
\[
\xi_{kl}^r=\begin{cases}
\delta_{ki}, \ \text{if }x_{r}t_l=t_i\\
\alpha_{k}^{j},\ \text{if }x_{r}t_l=b_j
\end{cases},
\]
where $\delta_{ki}=1$ if $k=i$ and $\delta_{ki}=0$ otherwise.
\end{defn}

These matrices describe the multplication operations on the linear space $V$, the basis of which consists of elements in $\mathcal{O}_{\lambda}$. For example, the formal multiplication matrix $T_0$ is the matrix for multiplicaton by $x_{0}$. Let $t_l$ be a monomial in $\mathcal{O}_{\lambda}$. If $x_{0}t_l\in\mathcal{O}_\lambda$, then $T_{0}(t_l)=x_{0}t_l$. If $x_{0}t_l=b_j$, then $T_{0}(t_l)=\sum_{i=1}^{n}\alpha_{i}^{j}t_i$.
 
The following theorems are proved in \cite{KR05} with slightly different presentations.
 
\begin{theorem}\label{com}\cite[Theorem 6.4.30.]{KR05}
Let $\lambda$ be an $m$-dimensional partition of $n$. Let $\mathcal{O}_{\lambda}=\{t_1,...,t_n\}$ be the corresponding set of monomials. Suppose $\partial\mathcal{O}_{\lambda}=\{b_1,...,b_\nu\}$. Define the polynomials $F_j=b_j-\sum_{i=1}^{n}\alpha_{i}^{j}t_i$ for $j=1,...,\nu$. Let $I$ be the ideal in $\mathbb{C}[x_0,...,x_m]$ generated by the polynomials $F_j$. Then the following conditions are equivalent:

a) The dimension of $\bbc[x_0,...,x_m]/I$ is $n$.

b) The formal multiplication matrices of $\mathcal{O}_{\lambda}$ are pairwise commuting.

In that case the formal multiplication matrices represent the multiplication endormorphism of $\bbc[x_0,...,x_m]/I$ with respect to the basis $\overline{t_1},...,\overline{t_n}$.
\end{theorem}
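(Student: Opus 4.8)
The plan is to recognize this as the Border Basis Theorem applied to the finite order ideal $\mathcal{O}_\lambda$: the defining inequalities of an $m$-dimensional partition say exactly that $\mathcal{O}_\lambda$ is closed under passing to divisors, $\#\mathcal{O}_\lambda=n$, and $\partial\mathcal{O}_\lambda$ as defined really is the border of $\mathcal{O}_\lambda$. The one technical ingredient I would establish first, independently of either implication, is the \emph{Border Division Algorithm}: every $f\in\bbc[x_0,\dots,x_m]$ is congruent modulo $I$ to a $\bbc$-linear combination of $t_1,\dots,t_n$. The point is that every monomial outside $\mathcal{O}_\lambda$ is divisible by some $b_j$ — because the minimal monomial generators of the monomial ideal complementary to $\mathcal{O}_\lambda$ all lie in $\partial\mathcal{O}_\lambda$ — so one may repeatedly rewrite each occurrence of $b_j$ using $F_j\equiv\sum_i\alpha_i^j t_i$. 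Termination is the usual Noetherian induction on an index function counting how many border-passages are needed to reach a monomial from $\mathcal{O}_\lambda$ (degree alone does not drop, which is why one needs the index). This yields unconditionally $\dim\bbc[x_0,\dots,x_m]/I\le n$, and that $\{\bar t_1,\dots,\bar t_n\}$ spans $\bbc[x_0,\dots,x_m]/I$.

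For $a)\Rightarrow b)$: if $\dim\bbc[x_0,\dots,x_m]/I=n$, the spanning set $\{\bar t_i\}$ is a basis. For each $r$, multiplication by $x_r$ is a $\bbc$-linear endomorphism of $\bbc[x_0,\dots,x_m]/I$, and in this basis it sends $\bar t_l\mapsto\overline{x_r t_l}$, which is $\bar t_i$ when $x_r t_l=t_i\in\mathcal{O}_\lambda$ and $\sum_k\alpha_k^j\bar t_k$ when $x_r t_l=b_j\in\partial\mathcal{O}_\lambda$ (using $F_j\in I$); this matrix is precisely $T_r$. Since multiplication operators on the commutative ring $\bbc[x_0,\dots,x_m]/I$ commute, so do the $T_r$, and this argument simultaneously proves the final assertion of the theorem.

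For $b)\Rightarrow a)$: let $V$ be the $\bbc$-space with basis $t_1=1,t_2,\dots,t_n$. If the $T_r$ pairwise commute they generate a commutative subalgebra of $\End_{\bbc}(V)$, so there is a unique $\bbc$-algebra homomorphism $\bbc[x_0,\dots,x_m]\to\End_{\bbc}(V)$ with $x_r\mapsto T_r$, making $V$ a $\bbc[x_0,\dots,x_m]$-module. First, by induction on degree, $x^{a_l}\cdot 1=t_l$ for every $t_l=x^{a_l}\in\mathcal{O}_\lambda$: pick $r$ with $t_l/x_r\in\mathcal{O}_\lambda$ (possible since $\mathcal{O}_\lambda$ is an order ideal) and use the $\mathcal{O}_\lambda$-case of the definition of $T_r$. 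Hence the module map $\psi\colon\bbc[x_0,\dots,x_m]\to V$, $f\mapsto f\cdot 1$, is surjective. Second, for a border monomial written as $b_j=x_r t_l$ with $t_l\in\mathcal{O}_\lambda$ we get $\psi(b_j)=T_r(t_l)=\sum_k\alpha_k^j t_k=\psi(\sum_k\alpha_k^j t_k)$, so $\psi(F_j)=0$; thus $I\subseteq\ker\psi$ and $\psi$ induces a surjection $\bbc[x_0,\dots,x_m]/I\twoheadrightarrow V$. Therefore $\dim\bbc[x_0,\dots,x_m]/I\ge n$, and combined with the unconditional bound $\le n$ we obtain equality.

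The step I expect to be the main obstacle is the Border Division Algorithm, specifically proving termination: replacing $b_j$ by $\sum_i\alpha_i^j t_i$ need not lower the degree, so naive induction fails, and one must organize the rewriting by the index function to get a Noetherian argument. This is exactly the technical core carried out in \cite{KR05}, which I would invoke for that point; the module-theoretic steps above are then routine, the only subtlety being that hypothesis $b)$ is precisely what makes the $\bbc[x_0,\dots,x_m]$-module structure on $V$ well-defined, and that $t_1=1$ generates $V$ as a module.
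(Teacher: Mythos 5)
Your proposal is correct, but note that the paper offers no proof of this statement at all: it is quoted verbatim (as the label and citation indicate) from \cite[Theorem 6.4.30]{KR05}, with the remark that the results ``are proved in \cite{KR05} with slightly different presentations.'' What you have written is a faithful reconstruction of the standard proof from that reference --- the Border Division Algorithm giving $\dim\leq n$ and the spanning property (with the index function handling termination), commutativity of multiplication operators for a) $\Rightarrow$ b), and the commuting matrices inducing a module structure on $V$ for b) $\Rightarrow$ a) --- so it matches the intended source rather than anything in the paper itself.
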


Since the matrices are explicit, we can write down all the relations of the coefficients of $F_j$ such that the formal multiplication matrices are pairwise commuting.

\begin{theorem}\label{hardrel}\cite[Proposition 6.4.32.]{KR05}\footnote{We correct some typos and add a missing set of equations.}
Using the same notations in theorem \ref{com}, we define a map for $r\in\{0,...,m\}$ and $i\in\{1,...,n\}$
\[
\rho_{r}(i)=\begin{cases}
j,\ \text{if } x_{r}t_i=t_j\in\mathcal{O}_{\lambda}\\
k,\ \text{if } x_{r}t_i=b_k\in\partial\mathcal{O}_{\lambda}
\end{cases}.
\]
Then the formal multiplication matrices are pairwise commuting if and only if the following equations are satisfied for $i,p\in\{1,...,n\}$ and $0\leq r<s\leq m$:
\[
(1) \sum_{\{d|x_{r}t_{d}\in\mathcal{O}_{\lambda}\}}\delta_{p\rho_{r}(d)}\alpha_{d}^{k}+\sum_{\{d|x_{r}t_{d}\in\partial\mathcal{O}_{\lambda}\}}\alpha_{p}^{\rho_{r}(d)}\alpha_{d}^{k}=\alpha_{p}^{l}
\text{ if } x_{r}t_{i}=t_{j},\ x_{s}t_{i}=b_{k},\ x_{r}b_{k}=b_l
\]
\[
(2) \sum_{\{d|x_{s}t_{d}\in\mathcal{O}_{\lambda}\}}\delta_{p\rho_{s}(d)}\alpha_{d}^{k}+\sum_{\{d|x_{s}t_{d}\in\partial\mathcal{O}_{\lambda}\}}\alpha_{p}^{\rho_{s}(d)}\alpha_{d}^{k}=\alpha_{p}^{l}
\text{ if } x_{s}t_{i}=t_{j},\ x_{r}t_{i}=b_{k},\ x_{s}b_{k}=b_l
\]
\[\begin{aligned}
(3) \sum_{\{d|x_{r}t_{d}\in\mathcal{O}_{\lambda}\}}\delta_{p\rho_{r}(d)}\alpha_{d}^{k}+\sum_{\{d|x_{r}t_{d}\in\partial\mathcal{O}_{\lambda}\}}\alpha_{p}^{\rho_{r}(d)}\alpha_{d}^{k}=
\sum_{\{d|x_{s}t_{d}\in\mathcal{O}_{\lambda}\}}\delta_{p\rho_{s}(d)}\alpha_{d}^{j}+\sum_{\{d|x_{s}t_{d}\in\partial\mathcal{O}_{\lambda}\}}\alpha_{p}^{\rho_{s}(d)}\alpha_{d}^{j}\\
\text{if } x_{r}t_{i}=b_{j},\ x_{s}t_{i}=b_{k}
\end{aligned}\]
\end{theorem}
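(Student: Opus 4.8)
The plan is to verify directly that the commutativity of the formal multiplication matrices $T_r T_s = T_s T_r$ is equivalent, entry by entry, to the stated system of polynomial equations. First I would fix indices $p, l \in \{1,\dots,n\}$ and $0 \le r < s \le m$ and compute the $(p,l)$-entry of both $T_r T_s$ and $T_s T_r$. By definition $(T_r T_s)_{pl} = \sum_{k=1}^{n} \xi^r_{pk}\xi^s_{kl}$, so everything reduces to understanding what $T_s$ does to the basis vector $t_l$ and then applying $T_r$ to the result. The key case distinction is on the positions of $x_s t_l$ and $x_r t_l$ relative to $\mathcal{O}_\lambda$: each of $x_r t_l$ and $x_s t_l$ either lands in $\mathcal{O}_\lambda$ (a single basis vector, contributing through a Kronecker delta) or lands in $\partial\mathcal{O}_\lambda$ (expanded via the coefficients $\alpha^j_i$ of $F_j$). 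This gives the three cases in the statement: (i) $x_r t_l \in \mathcal{O}_\lambda$ but $x_s t_l \in \partial\mathcal{O}_\lambda$, (ii) the symmetric situation, and (iii) both $x_r t_l$ and $x_s t_l$ lie in $\partial\mathcal{O}_\lambda$ (the case where both are in $\mathcal{O}_\lambda$ is automatic since monomial multiplication commutes, so $T_r T_s$ and $T_s T_r$ agree trivially there). Renaming $i := l$ to match the paper's indexing, these are exactly the hypotheses attached to equations (1), (2), (3).

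Next, for case (iii) I would expand $T_r T_s (t_i)$: since $x_s t_i = b_k$, we have $T_s(t_i) = \sum_{d=1}^n \alpha^k_d t_d$, and then $T_r$ acts on each $t_d$ according to whether $x_r t_d \in \mathcal{O}_\lambda$ (giving $\delta_{p\rho_r(d)}$ after extracting the $p$-th coordinate) or $x_r t_d \in \partial\mathcal{O}_\lambda$ (giving $\alpha^{\rho_r(d)}_p$); summing against $\alpha^k_d$ produces the left-hand side of (3). Symmetrically, using $x_r t_i = b_j$, the composite $T_s T_r(t_i)$ has $p$-th coordinate equal to the right-hand side of (3). Equating coordinates for all $p$ is equivalent to $T_r T_s(t_i) = T_s T_r(t_i)$. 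For cases (i) and (ii), one side of the commutator is a genuine expansion through $b_k$ as above, while the other side is $T_r$ (or $T_s$) applied to a single basis vector $t_j = x_s t_i$ (resp. $x_r t_i$) which then gets multiplied by $x_r$; because $x_r t_j = x_r x_s t_i = x_r b_k = b_l$ lies in $\partial\mathcal{O}_\lambda$, that side contributes exactly $\alpha^l_p$, the right-hand side of (1) (resp. (2)). Finally, since the $t_i$ form a basis, $T_rT_s = T_sT_r$ holds if and only if these coordinate identities hold for every $i,p$ and every pair $r<s$, which is the claimed equivalence; reducing from arbitrary pairs in $\{0,\dots,m\}$ to ordered pairs $r<s$ is harmless since the commutator is antisymmetric and vanishes for $r=s$.

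The routine but slightly delicate bookkeeping — and the main place where one must be careful — is case (iii), specifically making sure the ``border expansion of a border element'' is handled consistently: when $x_s t_i = b_k$ and we then hit $t_d$ with $x_r$, it can happen that $x_r t_d$ is again in $\partial\mathcal{O}_\lambda$, so one is implicitly using that each $F_j$ rewrites its border monomial $b_j$ purely in terms of $\mathcal{O}_\lambda$. This is precisely the subtlety corrected in the footnote (the ``missing set of equations'' being, essentially, the full symmetric treatment of (1)/(2) plus the second summation in (3) over $d$ with $x_s t_d \in \partial\mathcal{O}_\lambda$); one must take care not to drop those terms. Everything else is a direct, if notation-heavy, unwinding of the definitions of $T_r$ and $\rho_r$, so I would present the computation for (3) in full and indicate that (1) and (2) follow by the same argument with one side simplified. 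I would also remark that this is Proposition 6.4.32 of \cite{KR05}, so the role of the proof here is to record the corrected statement in the notation adapted to $\mathcal{O}_\lambda$ coming from an $m$-dimensional partition, and that the equivalence a) $\Leftrightarrow$ b) of Theorem \ref{com} is what lets us replace ``$\dim \mathbb{C}[x_0,\dots,x_m]/I = n$'' by the matrix commutativity in the first place.
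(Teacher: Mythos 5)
Your proposal is correct and is essentially the argument of the cited source: the paper itself gives no proof of this statement, deferring entirely to \cite[Proposition 6.4.32]{KR05}, and your entrywise comparison of $T_rT_s$ and $T_sT_r$ with the four-way case split on whether $x_rt_i$ and $x_st_i$ land in $\mathcal{O}_\lambda$ or $\partial\mathcal{O}_\lambda$ (using that $\mathcal{O}_\lambda$ is an order ideal, so $x_rb_k\notin\mathcal{O}_\lambda$ and the index $l$ in (1)--(2) is well defined) is exactly that verification. The only blemish is a small index slip in your discussion of cases (i)/(ii) --- for equation (1) the basis vector is $t_j=x_rt_i$ and it is subsequently multiplied by $x_s$, giving $x_st_j=x_sx_rt_i=x_rb_k=b_l$ --- which does not affect the argument.
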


\begin{proof}[Proof of Theorem \ref{decomp}]
By Proposition \ref{newgen}, Theorem \ref{com} and Theorem \ref{hardrel}, the affine variety $V_\lambda$ is defined by the variables/coefficients $\alpha_*^*$ quotient by the relations $(1),(2),(3)$. The decomposition in the Grothendieck of varieties follows from \cite[Lemma A.4]{LL20}. 
\end{proof}

We give some examples using the notation in Proposition \ref{newgen}.

\begin{example}
\begin{tabular}{|c|c|c|c|c|}
\hline
$3$ & $2$ & $1$ & $1$ & $1$\\
\hline
\end{tabular}
\[
m=1,\ \lambda_0=3,\ \lambda_1=2,\ \lambda_2=1,\ \lambda_3=1,\ \lambda_4=1.
\]
Ideals $I\subset k[z,x]$ associated with this partition have the form
\[
I=(z^3-a^{30}_{01}x-a^{30}_{11}xz-a^{30}_{02}x^2-a^{30}_{03}x^3-a^{40}_{04}x^4,\ z^2x-a^{21}_{02}x^2-a^{21}_{03}x^3-a^{21}_{04}x^4,
\]
\[
zx^2-a^{12}_{03}x^3-a^{12}_{04}x^4,\ zx^3-a^{13}_{04}x^4,\ x^5)
\]
The relations between the coefficients are
\[
a^{30}_{03}+a^{30}_{11}a^{12}_{04}=a^{21}_{02}a^{12}_{04}+a^{21}_{03}a^{13}_{04},\ a^{30}_{02}+a^{30}_{11}a^{12}_{03}=a^{21}_{02}a^{12}_{03},
\]
\[
a^{30}_{01}=0,\ a^{21}_{03}=a^{12}_{03}a^{13}_{04},\ a^{21}_{02}=0,\ a^{12}_{03}=a^{13}_{04}.
\]
Hence $V_\lambda$ is an affine space $\bba^5$ with free variables $a^{30}_{04},a^{30}_{11},a^{21}_{04},a^{12}_{04},a^{13}_{04}$.
\end{example}

\begin{remark}\label{order}
When $m=1$, one can define a total order on the variables $a^*_*$. Then one observes that all the variables are expressed by larger variables. Hence the stratum $V_\lambda$ is always an affine space with the expected dimension.
\end{remark}

\begin{example}
\begin{tabular}{|c|c|c|}
  \cline{1-1}
  $1$  &\multicolumn{1}{|c}{}  \\
   \hline
   $1$ & $1$ & $1$  \\
  \hline
\end{tabular}
\[
m=2, \lambda_{0,0}=1,\ \lambda_{1,0}=1,\ \lambda_{2,0}=1,\ \lambda_{0,1}=1.
\]
Ideals $I\subset k[z,x,y],$ associated with this partition have the form
\[
I=(z-a^{100}_{010}x-a^{100}_{020}x^2-a^{100}_{001}y,\ zx-a^{110}_{020}x^2,\, zy-a^{101}_{010}x-a^{101}_{020}x^2,\, y^2-a^{002}_{010}x-a^{002}_{020}x^2,\ xy-a^{011}_{020}x^2,\ zx^2,\ x^3)
\]
The relations between the coefficients are
\[
a^{101}_{010}=0,\ a^{100}_{010}+a^{100}_{001}a^{011}_{020}=a^{110}_{020},\ a^{101}_{010}a^{011}_{020}=a^{110}_{020}a^{002}_{010}
\]
\[
a^{100}_{001}a^{002}_{020}+a^{100}_{010}a^{011}_{020}=a^{101}_{020},\ a^{100}_{001}a^{002}_{010}=a^{101}_{010},\ a^{002}_{010}=0
\]
Hence $V_\lambda$ is an affine space $\bba^5$ with free varibles $a^{100}_{010},a^{100}_{020},a^{100}_{001},a^{002}_{020},a^{011}_{020}$.
\end{example}

However, $V_\lambda$ is not an affine space in general. The reason for not being affine is that only quadratic terms are left in some relations or some variabes are expressed in different ways. One can carry out the following calculations by hand or by programme.

\begin{example}\label{minimal}

\begin{tabular}{|c|c|}
\cline{1-1}
$1$ &\multicolumn{1}{|c}{}  \\
\hline
$2$ & $1$  \\
\hline
\end{tabular}

Only one quadratic term is left in the relation from $T_yT_z(t_{100})=T_zT_y(t_{100})$:
\[
a^{002}_{010}a^{200}_{001}=0
\]
All the other variables are free variables. Hence it is not an affine space. We deduce that  $[V_\lambda]=2L^3-L^2$ in the Grothendieck group of varieties by calculations. 
\end{example}

\begin{example}

\begin{tabular}{|c|c|}
\cline{1-1}
$1$ &\multicolumn{1}{|c}{}  \\
\cline{1-1}
$1$ &\multicolumn{1}{|c}{}  \\
\hline
$2$ & $1$  \\
\hline
\end{tabular}

Excluding free variables and the variables determined by other variables, we have the relation 
\[
a_{010}^{003}((a_{002}^{101})^2-a_{002}^{200})=0.
\]
We deduce that  $[V_\lambda]=2L^4-L^3$ in the Grothendieck group of varieties by calculations. 
\end{example}

\begin{example}

\begin{tabular}{|c|c|c|}
\cline{1-1}
$1$ &\multicolumn{1}{|c}{}  \\
\hline
$2$ & $1$ & $1$ \\
\hline
\end{tabular}

Excluding free variables and the variables determined by other variables, we have the relation 
\[
a_{001}^{200}((a_{020}^{011})^2-a_{020}^{002})=0.
\]
We deduce that $[V_\lambda]=2L^5-L^4$ in the Grothendieck group of varieties by calculations. 
\end{example}

\begin{example}

\begin{tabular}{|c|c|}
\cline{1-1}
$1$ &\multicolumn{1}{|c}{}  \\
\hline
$3$ & $1$  \\
\hline
\end{tabular}

Excluding free variables and the variables determined by other variables, we have the relations
\[
a_{010}^{002}a_{001}^{300}=0
\]
We deduce that $[V_\lambda]=2L^3-L^2$ in the Grothendieck group of varieties by calculations. 
\end{example}

\begin{proof}[Proof of Theorem \ref{genseries1}]
We use computer programme to generate all the 2-dimensional partitions of $n$ and calculate the dimension of $V_\lambda$ for $n\leq 5$. This is done by the observation that all the strata are affine for $n\leq 5$ except for the above examples. Then we counting the number of points over $\bbf_2$ for those affine strata. See Appendix.
\end{proof}

Via the power structure over the Grothendieck group of varieties \cite{ZLH06}, the relation between the Hilbert schemes of points and the punctual Hilbert schemes of points can be expressed as follows.

\begin{thm}\cite[Theorem 1]{ZLH06}
For a smooth quasi-projective variety $X$ of dimension $d$, the following identity holds in $K_{0}({\rm Var}_\bbc)[[T]]$:
\[
\mathbb{H}_X(T)=(\mathbb{H}_{\bba^d,0}(T))^{[X]},
\]
where
\[
\mathbb{H}_X(T):=1+\sum_{n=1}^{\infty}[{\rm Hilb}^{n}(X)]T^n,\ \ \mathbb{H}_{\bba^d,0}(T):=1+\sum_{n=1}^{\infty}[{\rm Hilb}^{n}_{0}(\bba^d)]T^n
\]
\end{thm}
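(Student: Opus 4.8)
The plan is to match, term by term, the ``support stratification'' of ${\rm Hilb}^n(X)$ against the explicit combinatorial formula that defines the power structure, after reducing the local geometry of $X$ near a point to that of $\bba^d$. Recall that for $A(T)=1+\sum_{i\ge1}[A_i]T^i$ with $A_i$ quasi-projective and for a variety $X$, the power structure declares the coefficient of $T^n$ in $A(T)^{[X]}$ to be
\[
\sum_{\{k_i\}:\ \sum_i ik_i=n}\Big[\Big(\big({\textstyle\prod_i}X^{k_i}\big)\setminus\Delta\Big)\times_{\prod_i S_{k_i}}{\textstyle\prod_i}A_i^{k_i}\Big],
\]
where $\Delta\subset\prod_iX^{k_i}$ is the locus on which at least two of the $\sum_ik_i$ marked points of $X$ coincide and $S_{k_i}$ permutes simultaneously the $i$-th block of $X$-factors and the $i$-th block of $A_i$-factors; this depends only on the classes $[A_i]$ and is a power structure by \cite[Lemma A.4]{LL20}. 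Taking $A_i={\rm Hilb}^i_0(\bba^d)$ yields the right-hand side of the asserted identity, so it suffices to compute the left-hand side and recognize it.

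First I would invoke smoothness of $X$ to build a finite stratification $X=\bigsqcup_\alpha W_\alpha$ into locally closed subvarieties such that each $W_\alpha$ lies in a Zariski open $U_\alpha\subseteq X$ carrying functions $x_1,\dots,x_d\in\mathcal{O}(U_\alpha)$ with $dx_1\wedge\cdots\wedge dx_d$ nowhere vanishing; equivalently $(x_1-x_1(q),\dots,x_d-x_d(q))$ is a regular system of parameters at every $q\in U_\alpha$, so $\widehat{\mathcal{O}}_{X,q}\cong\bbc[[\xi_1,\dots,\xi_d]]$ via $\xi_a\leftrightarrow x_a-x_a(q)$. Such $x_a$ exist Zariski-locally because $\Omega^1_X$ is locally free of rank $d$ and ``spanning $\det\Omega^1_X$'' is an open condition; it is this coframe, rather than merely an \'etale chart $U_\alpha\to\bba^d$, that will be needed, and this is the only essential use of smoothness. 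Next, since every finite subscheme $Z\subset X$ splits as $Z=\coprod_p Z_p$ over its support points (with $Z_p$ of some length $\ell_p\ge1$, $\sum_p\ell_p={\rm len}\,Z$), and a finite subscheme supported inside an open $U$ is a subscheme of $U$, partitioning subschemes by which $W_\alpha$ each support point lies in gives, in $K_0({\rm Var}_\bbc)[[T]]$,
\[
\mathbb{H}_X(T)=\prod_\alpha\Big(1+\sum_{\ell\ge1}[\mathcal{H}^\ell(W_\alpha)]\,T^\ell\Big),
\]
where $\mathcal{H}^\ell(W_\alpha)$ is the variety of length-$\ell$ subschemes of $U_\alpha$ whose support lies in $W_\alpha$. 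This is the standard device behind the formulas of Cheah and G\"ottsche and behind the very construction of the power structure.

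Fixing $\alpha$ and writing $W=W_\alpha$, $U=U_\alpha$, I would then use the coframe to trivialize the relative punctual Hilbert scheme over $U$: for each $i$ the sections $x_a\otimes1-1\otimes x_a$ identify the pushforward to $U$ of $\mathcal{O}_{U\times U}/\mathcal{I}_\Delta^{\,i}$ with $\mathcal{O}_U[\xi_1,\dots,\xi_d]/(\xi_1,\dots,\xi_d)^i$, and every length-$i$ local subscheme contains $\mathfrak{m}_q^{\,i}$, so the space of pairs $(q,Z_q)$ with $Z_q$ supported at $q$ of length $i$ is the \emph{trivial} bundle $U\times{\rm Hilb}^i_0(\bba^d)$; in particular ``the subscheme at $q$ cut out by a given ideal'' is an honest morphism $W\times{\rm Hilb}^i_0(\bba^d)\to{\rm Hilb}^i(U)$. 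Stratifying $\mathcal{H}^\ell(W)$ by the multiset of multiplicities of its support points (tuples $\{k_i\}$ with $k_i$ points of multiplicity $i$ and $\sum_i ik_i=\ell$) and ordering the points inside each multiplicity block, I would read off an isomorphism of varieties
\[
\mathcal{H}^\ell(W)\ \cong\ \bigsqcup_{\{k_i\}:\ \sum_i ik_i=\ell}\ \Big(\big({\textstyle\prod_i}W^{k_i}\big)\setminus\Delta\Big)\times_{\prod_i S_{k_i}}{\textstyle\prod_i}\big({\rm Hilb}^i_0(\bba^d)\big)^{k_i},
\]
with $\Delta$ the big diagonal, on which $\prod_iS_{k_i}$ acts freely so that the quotient is a variety. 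By the formula recalled above, the right-hand side is the coefficient of $T^\ell$ in $\big(\mathbb{H}_{\bba^d,0}(T)\big)^{[W]}$; hence $1+\sum_{\ell\ge1}[\mathcal{H}^\ell(W)]T^\ell=\big(\mathbb{H}_{\bba^d,0}(T)\big)^{[W]}$.

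Substituting into the product over $\alpha$ and using that $A(T)^{[W]+[W']}=A(T)^{[W]}A(T)^{[W']}$, a defining property of the power structure, together with $[X]=\sum_\alpha[W_\alpha]$, I would conclude
\[
\mathbb{H}_X(T)=\prod_\alpha\big(\mathbb{H}_{\bba^d,0}(T)\big)^{[W_\alpha]}=\big(\mathbb{H}_{\bba^d,0}(T)\big)^{\sum_\alpha[W_\alpha]}=\big(\mathbb{H}_{\bba^d,0}(T)\big)^{[X]}.
\]
The main obstacle I anticipate is in the third paragraph: upgrading the \'etale-local statement ``$X$ is locally $\bba^d$'' to a genuine \emph{Zariski}-local trivialization of the relative punctual Hilbert scheme, since only the latter permits factoring classes in $K_0({\rm Var}_\bbc)$ and makes the ``subscheme from ideal'' assignment a morphism over $W$ --- this is exactly what the nowhere-vanishing coframe of the second paragraph is for. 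Verifying that the support stratification is constructible and that the symmetric-group quotients above are literally the balanced products occurring in the power-structure formula should then be routine.
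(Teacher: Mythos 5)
The paper does not prove this theorem: it is imported verbatim from \cite{ZLH06} with a citation, so there is no internal proof to compare your argument against. Your proposal is a correct reconstruction of the proof given in that reference --- stratify ${\rm Hilb}^n(X)$ by the support cycle, use smoothness to trivialize the relative punctual Hilbert scheme Zariski-locally (your nowhere-vanishing coframe is exactly what makes the local-triviality honest rather than merely \'etale-local), and recognize the resulting locally closed strata as the terms of the geometric formula defining the power structure --- with the one cosmetic slip that the well-definedness of that formula as a power structure is due to Gusein-Zade--Luengo--Melle-Hern\'andez themselves rather than to \cite[Lemma A.4]{LL20}, which only concerns additivity of classes over locally closed decompositions.
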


\begin{proof}[Proof of Theorem \ref{genseries2}]
Denote $K_{0}({\rm Var}_\bbc)$ by $R$. Any series $A(T)\in 1+T\cdot R[[T]]$ can be uniquely written as a product of the form $\prod_{i=1}^{\infty}(1-T^i)^{-a_i}$ with $a_i\in R$, and 
\[
(A(T))^m=\prod_{i=1}^{\infty}(1-T^{i})^{-a_{i}m}
\]
for $m\in R$.
When $A(T)=\mathbb{H}_{\bba^3,0}(T)$, $a_i$ is a polynomial in $L$ for $i\leq 5$ by Theorem \ref{genseries1}, and 
\[
\mathbb{H}_{\bba^3}(T)=(\mathbb{H}_{\bba^3,0}(T))^{[\bba^3]}.
\]
Hence if we can calculate $(1-T^i)^{kL}$ for $k\in\bbz$, we can calculate the coefficients of $T^n$ in the left hand side for $n\leq 5$. But $(1-T)^{-[M]}=\sum_{n=0}^{\infty}[S^{n}M]T^n$ (see \cite{ZLH06}), where $S^{n}M=M^{n}/S_n$ is the $n$th symmetric power of $M$, and $[S^{n}(\bba^m)]=[\bba^{nm}]$ in $K_{0}({\rm Var}_\bbc)$. So we can carry out the calculation by programme.
\end{proof}

\section{Appendix}

\begin{lemma}\label{line}
All the strata as  
\begin{tabular}{|c|c|c|c|}
\hline
$1$ & $1$ & $\cdots$ & $1$\\
\hline
\end{tabular}
are affine spaces.
\end{lemma}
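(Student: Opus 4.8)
The plan is to follow the strategy of Remark~\ref{order} — exhibiting the defining relations of $V_\lambda$ as a triangular system that expresses some coordinates in terms of the others — but with an extra cross-relation to dispose of that has no analogue when $m=1$. Write $z=x_0$, $x=x_1$, $y=x_2$, and let $\lambda$ be the single-row partition, $\lambda_{i,0}=1$ for $0\le i\le n-1$ and all other entries $0$, so that $\mathcal O_\lambda=\{1,x,\dots,x^{n-1}\}$. A direct computation of the border gives $\partial\mathcal O_\lambda=\{x^n\}\cup\{zx^i:0\le i\le n-1\}\cup\{yx^i:0\le i\le n-1\}$, and unwinding the index set $J$ of Proposition~\ref{newgen} against the lexicographic order $\rhd$ shows that the generators produced there take the form
\[
F_{zx^i}=zx^i-\sum_{j=i+1}^{n-1}q^{(i)}_j\,x^j,\qquad F_{yx^i}=yx^i-\sum_{j=i+1}^{n-1}s^{(i)}_j\,x^j\qquad (0\le i\le n-1),
\]
together with $F_{x^n}=x^n$. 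Here $q^{(i)}_j$ and $s^{(i)}_j$ are the free coefficients, I abbreviate $p_j:=q^{(0)}_j$, $r_j:=s^{(0)}_j$, and the essential feature is that each generator involves \emph{only} monomials $x^j$ with $j>i$. Hence, in the basis $e_j=\overline{x^j}$ ($0\le j\le n-1$), the formal multiplication matrix $T_1$ is the nilpotent down-shift $e_j\mapsto e_{j+1}$, $e_{n-1}\mapsto 0$, whereas $T_0$ and $T_2$ are strictly lower triangular: $T_0e_j=\sum_{l>j}q^{(j)}_le_l$ and $T_2e_j=\sum_{l>j}s^{(j)}_le_l$. By Theorem~\ref{com} and Theorem~\ref{hardrel}, $V_\lambda$ is the closed subscheme of the affine space with coordinates $q^{(i)}_j,s^{(i)}_j$ cut out by the three matrix equations $[T_0,T_1]=[T_1,T_2]=[T_0,T_2]=0$, these being exactly the relations (1)--(3).

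First I would solve $[T_1,T_0]=0$. Comparing the coefficient of $e_m$ in $[T_1,T_0]e_j$ gives the equations $q^{(j+1)}_m=q^{(j)}_{m-1}$ for $0\le j\le n-3$ and $j+2\le m\le n-1$; this is a square system, it is solved uniquely by $q^{(i)}_l=p_{l-i}$, and — crucially — it imposes no condition at all on $p_1,\dots,p_{n-1}$. By the identical argument applied to $[T_1,T_2]=0$ one obtains $s^{(i)}_l=r_{l-i}$, again with no condition on $r_1,\dots,r_{n-1}$.

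It then remains to verify that $[T_0,T_2]=0$ holds automatically once these substitutions are made. Plugging in, the coefficient of $e_m$ in $[T_0,T_2]e_j$ becomes the difference of the two truncated convolutions $\sum_{l=j+1}^{m-1}r_{l-j}\,p_{m-l}$ and $\sum_{l=j+1}^{m-1}p_{l-j}\,r_{m-l}$, and the reindexing $l\mapsto j+m-l$ carries the first onto the second, so the difference vanishes identically. Therefore $V_\lambda$ is exactly the graph of the morphism sending $(p_1,\dots,p_{n-1},r_1,\dots,r_{n-1})$ to $(q^{(i)}_l=p_{l-i},\,s^{(i)}_l=r_{l-i})$, and so $V_\lambda\cong\bba^{2(n-1)}$; every relation has integer coefficients, so this holds over an arbitrary field $k$. (The dimension $2(n-1)$ is the expected one, since $\lambda$ parametrizes the curvilinear length-$n$ subschemes supported at the origin of $\bba^3$.)

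The only genuine work here is bookkeeping: reading off the supports of the $F_M$ carefully enough to be certain of the ``$j>i$'' triangularity, and checking that $[T_1,T_0]=0$ really is a square triangular system leaving no residual constraint on the $p_\bullet$. The conceptual point — the one ingredient beyond Remark~\ref{order} — is the redundancy of $[T_0,T_2]=0$: since $T_0$ and $T_2$ both act in the ``$x$-direction'', their commuting cannot be detected from a single total order on the variables and must instead be deduced from the other two relations through the convolution symmetry above. I expect that last verification to be the main, though still short, obstacle.
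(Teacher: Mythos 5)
Your proposal is correct and follows essentially the same strategy as the paper: observe that the relations coming from the two commutators involving $T_x=T_1$ are purely linear (triangular, forcing $q^{(i)}_l=p_{l-i}$ and $s^{(i)}_l=r_{l-i}$), and then show that the remaining commutator $[T_0,T_2]=0$ is automatically satisfied. The only difference is in how that last redundancy is verified — the paper runs a downward induction on the monomial index using the formal down-shift $D_x$ together with $T_xT_y=T_yT_x$ and $T_xT_z=T_zT_x$, whereas you solve the triangular system explicitly and check the commutator by the convolution symmetry $\sum_{j<l<m}r_{l-j}p_{m-l}=\sum_{j<l<m}p_{l-j}r_{m-l}$ — and both yield $V_\lambda\cong\bba^{2(n-1)}$.
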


\begin{proof}
All the relations from $T_xT_y=T_yT_x$ and $T_xT_z=T_zT_x$ are given by one variable being equal to another variable. We claim that they imply $T_yT_z=T_zT_y$. This is done inductively by checking $T_yT_z(t_{0x0})=T_zT_y(t_{0x0})$ from larger $x$ to smaller $x$. When $x=n-1,n-2$, $T_yT_z(t_{0x0})=T_zT_y(t_{0x0})=0$. Suppose $T_yT_z(t_{0x0})=T_zT_y(t_{0x0})$ for $n-2\geq x> k$. Then
\[
T_yT_z(t_{0k0})=T_yT_zD_x(t_{0,k+1,0})=T_yD_xT_z(t_{0,k+1,0})
\]
\[
=T_zD_xT_y(t_{0,k+1,0})=T_zT_yD_x(t_{0,k+1,0})=T_zT_y(t_{0k0}),
\]
where $D_x$ is the endormorphism sending $t_{0x0}$ to $t_{0,x-1,0}$ (not the inverse of $T_x$ since $T_x$ is not invertible) and we used $T_xT_y=T_yT_x$ implicitly. 
\end{proof}

\begin{lemma}
All the strata as  
\begin{tabular}{|c|c|c|c|}
\hline
$a$ & $1$ & $\cdots$ & $1$\\
\hline
\end{tabular}
are affine spaces, $a>1$.
\end{lemma}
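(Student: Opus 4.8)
\textit{Plan.} The plan is to reduce the statement to the $m=1$ case, where affineness is already known (Remark~\ref{order}). Write $z=x_{0}$, $x=x_{1}$, $y=x_{2}$, let $b$ be the number of entries equal to $1$, and set $n=a+b$, so that
\[
\mathcal{O}_{\lambda}=\{1,z,\dots ,z^{a-1}\}\cup\{x,x^{2},\dots ,x^{b}\}.
\]
The key observation is that for such a $\lambda$ the generator $F_{y}$ of Proposition~\ref{newgen} has the shape $F_{y}=y-\sum_{j=1}^{b}\epsilon_{j}x^{j}$: for $M=(0,0,1)$ we have $\tilde M=(0,1)$, and since $\lambda_{s_{1},s_{2}}\neq 0$ forces $s_{2}=0$, the set $J$ selects only the pure powers $x,\dots ,x^{b}$. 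Hence in $A:=k[z,x,y]/I$ the class $\bar y$ is a polynomial in $\bar x$ with zero constant term, so $A$ is generated by $\bar z,\bar x$, the quotient map induces an isomorphism $A\cong k[z,x]/I''$ with $I'':=I\cap k[z,x]$, and the residues of $\{1,z,\dots ,z^{a-1},x,\dots ,x^{b}\}$ form a basis; thus $I''$ is an $m=1$ ideal whose set of standard monomials is this order ideal, i.e. an ideal of type $(a,1^{b})$.

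Next I would promote this to an isomorphism of varieties $V_{\lambda}\cong V_{(a,1^{b})}\times\bba^{b}$, where the second factor parametrizes the polynomials $\phi\in kx\oplus\cdots\oplus kx^{b}$. One direction sends $I$ to the pair $(I'',\bar y)$; the other sends $(I'',\phi)$ to $I:=\ker\bigl(k[z,x,y]\xrightarrow{\,y\mapsto\phi\,}k[z,x]/I''\bigr)=I''\cdot k[z,x,y]+(y-\phi)$, which has codimension $n$, and since $\bar y$ has no constant term the procedure of Proposition~\ref{partition} applied to $I$ returns $\lambda_{r_{1},r_{2}}=0$ for $r_{2}\ge 1$ and, for $r_{2}=0$, the $z$-heights of $I''$ along the $x$-axis, so $I$ again has standard monomial set $\mathcal{O}_{\lambda}$. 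These maps are morphisms: passing from $I$ to $(I'',\phi)$ merely forgets coordinates (the coefficients of $I''$ are exactly those occurring in $F_{z^{a}},F_{z^{i}x},F_{zx^{j}},F_{x^{b+1}}$, and $\phi$ records the $\epsilon_{j}$), while going back expresses the remaining coefficients of $I$ — those of the $F_{z^{i}y}$ and $F_{x^{j}y}$ — as polynomials in the coefficients of $I''$ and the $\epsilon_{j}$, obtained by reducing $z^{i}\phi(\bar x)$ and $x^{j}\phi(\bar x)$ in $k[z,x]/I''$. Since $V_{(a,1^{b})}$ is an affine space by Remark~\ref{order}, so is $V_{\lambda}$ (of dimension $2b$, by Theorem~\ref{LL}).

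The step I expect to be the main obstacle is the verification just used: that the ideal built from $(I'',\phi)$ has $\mathcal{O}_{\lambda}$ — and not some smaller order ideal — as its set of standard monomials. This is where it matters crucially that $\phi$ has neither a constant term nor a $z$-term (changing that changes the standard monomials, hence the stratum, producing non-affine phenomena as in Example~\ref{minimal}), and it comes down to running the recursion of Proposition~\ref{partition} for $I=I''\cdot k[z,x,y]+(y-\phi)$ and tracking the colon ideals $(I:x_{1}^{r_{1}})$ and $(I_{r_{1}}:x_{2}^{r_{2}})$.

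Should this reduction be judged too indirect, there is a direct alternative in the spirit of Lemma~\ref{line}: compute the matrices $T_{z},T_{x},T_{y}$, use $[T_{z},T_{x}]=0$ and $[T_{x},T_{y}]=0$ to solve for every coefficient — except a distinguished free subset — as a polynomial in smaller coefficients (these relations in particular make several coefficients vanish, e.g. the coefficient of $x$ in each $F_{z^{i}y}$), and then check that $[T_{z},T_{y}]=0$ is automatic, the vanishings just noted killing the only products that could obstruct triangularity. The obstacle there is precisely this last check: unlike in Lemma~\ref{line}, the shift operator $D_{x}$ does not commute with $T_{z}$, because of the relation expressing $z^{a}$ as a combination of the $x^{j}$, so the reindexing argument of Lemma~\ref{line} has to be carried out by hand.
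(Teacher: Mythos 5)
Your proposal is correct, but it takes a genuinely different route from the paper's. The paper stays entirely inside its multiplication-matrix formalism: arguing as in Remark~\ref{order} and Lemma~\ref{line}, it reduces everything to the single potentially problematic identity $T_zT_y(t_{a-1,0,0})=T_yT_z(t_{a-1,0,0})$ and verifies by a short computation with the shift operator $D_x$ (using that $T_y$ kills the last term of $T_z(t_{a-1,0,0})$, so $D_x$ is well defined along the way) that this identity is forced by the relations $[T_z,T_x]=0$, $[T_y,T_x]=0$ and the remaining instances of $[T_z,T_y]=0$; hence no quadratic obstruction survives and the stratum is affine. You instead exploit the structural fact that for this shape of partition the tail of $F_y$ lies in $kx\oplus\cdots\oplus kx^{b}$ (no constant term, no $z$-terms), so $\bar y$ is a polynomial in $\bar x$ and the stratum splits as $V_\lambda\cong V_{(a,1^{b})}\times\bba^{b}$, with the first factor the corresponding $m=1$ stratum; affineness then follows from Remark~\ref{order} (or Theorem~\ref{LL}), and you even get the explicit dimension $2b$. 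Your reduction is more conceptual and isolates exactly why the hypothesis on the tail of $F_y$ matters (contrast Example~\ref{minimal}, where a constant or $z$-term in a $y$-type generator is what produces the non-affine quadric); the paper's argument is less illuminating but is uniform with the other appendix lemmas and sidesteps the one point your write-up should flesh out, namely that projection and reconstruction are mutually inverse morphisms --- for this one should note that on $V_\lambda$ Theorem~\ref{com} identifies the tail of $F_{z^{i}x^{j}y}$ with $\sum_{l}\epsilon_{l}\,T_z^{i}T_x^{j+l}(e_1)$, a polynomial in the $m=1$ coordinates and the $\epsilon_{l}$, and that the recursion of Proposition~\ref{partition} applied to $I''k[z,x,y]+(y-\phi)$ returns the partition $\lambda$ (which your computation of the colon ideals does establish).
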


\begin{proof}
We first notice that $T_xT_z=T_zT_x,T_yT_z=T_zT_y$ and $T_xT_y=T_yT_x$ is equivalent to $T_xT_z=T_zT_x,T_yT_z=T_zT_y$ and $T_xT_y(t_{0ij})=T_yT_x(t_{0ij})$ for all $i,j$. Then as in Remark \ref{order} and Lemma \ref{line}, it suffices to check $T_zT_y(t_{a-1,0,0})=T_yT_z(t_{a-1,0,0})$ is implied by other relations. Notice that $T_y$ send the last term of $T_z(t_{a-1,0,0})$ to $0$. Hence $D_x$ is well-defined during the calculation below:
\[
T_yT_z(t_{a-1,0,0})=T_yD_xT_xT_z(t_{a-1,0,0})=T_yD_xT_zT_x(t_{a-1,0,0})=T_zD_xT_yT_x(t_{a-1,0,0})
\]
\[
=T_zD_xT_yT_z^{a-1}T_x(t_{000})=T_zD_xT_yT_z^{a-1}(t_{010})=T_zD_xT_z^{a-1}(t_{011})=T_z^{a}(t_{001})=T_zT_y(t_{a-1,0,0})
\]
\end{proof}

\begin{lemma}
All the strata as  
\begin{tabular}{|c|c|c|c|}
\cline{1-1}
$1$ &\multicolumn{1}{|c}{}  \\
\hline
$1$ & $1$ & $\cdots$ & $1$ \\
\hline
\end{tabular}
are affine spaces.
\end{lemma}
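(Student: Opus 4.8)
The plan is to follow the strategy of Lemma~\ref{line} and the two lemmas following it: write down the coefficient variables, observe that the equations coming from $T_xT_y=T_yT_x$ and $T_xT_z=T_zT_x$ are all \emph{triangular} — each is vacuous, identifies one variable with another, or expresses one variable as a polynomial in strictly larger variables — and then show that the remaining commutator relation $T_yT_z=T_zT_y$ collapses to a single triangular equation. Write $z=x_0$, $x=x_1$, $y=x_2$ and set $k=n-2$, so that $\mathcal{O}_\lambda=\{1,x,x^2,\dots,x^{k},y\}$ and $\partial\mathcal{O}_\lambda=\{z,zx,\dots,zx^{k},zy,\;x^{k+1},xy,x^2y,\dots,x^{k}y,y^2\}$, the ambient affine space of coefficients having dimension $(k+1)^2$. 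Reading off the generators $F_M$ of Proposition~\ref{newgen}, one sees that $T_x$ is the truncated shift $1\mapsto x\mapsto\cdots\mapsto x^{k}\mapsto 0$ together with $T_x(y)\in V'$, where $V':=\langle x,x^2,\dots,x^{k}\rangle$; that $T_y$ and $T_z$ preserve $V'$; that $T_y(1)=y$ and $T_z(1)=a^z_y\,y+v_0$ with $v_0\in V'$; and that $T_y(y),T_z(y)\in V'$.

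\textbf{Step 1.} I would first fix a total order on the coefficient variables placing $a^z_y$ and the coefficients of $v_0$, of $T_x(y)$, of $T_z(y)$ and of $T_y(y)$ above the coefficients of $T_y$ and $T_z$ acting on $V'$ (the latter being forced, by commutativity with $T_x$, to be ``shifts'' of the coefficients of $T_y(x)$ and $T_z(x)$). An entry-by-entry check then shows that every equation produced by $T_xT_y=T_yT_x$ and $T_xT_z=T_zT_x$ (cf.\ Theorem~\ref{com}) is triangular for this order, so successive elimination exhibits the locus $Z\subseteq\bba^{(k+1)^2}$ cut out by these two families as an affine space.

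\textbf{Step 2.} It remains to show that on $Z$ the family $T_yT_z=T_zT_y$ contributes exactly one more triangular equation. The key point is that $T_x|_{V'}$ is a single regular nilpotent Jordan block, and the ``shift'' relations of Step~1 say precisely that $T_y|_{V'}$ and $T_z|_{V'}$ commute with it; hence $T_y|_{V'}$ and $T_z|_{V'}$ are polynomials in $T_x|_{V'}$, so the commutator $[T_y,T_z]:=T_yT_z-T_zT_y$ vanishes on $V'$. Since $T_xT_y=T_yT_x$ and $T_xT_z=T_zT_x$ on $Z$, the Jacobi identity gives $[T_x,[T_y,T_z]]=0$; applying $[T_y,T_z]$ to $1$ and to $y$ and using $T_x(1)=x\in V'$, $T_x(y)\in V'$ and $[T_y,T_z]|_{V'}=0$, one gets $T_x\bigl([T_y,T_z](1)\bigr)=T_x\bigl([T_y,T_z](y)\bigr)=0$. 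As $[T_y,T_z](1)$ and $[T_y,T_z](y)$ lie in $V'$ and $\ker(T_x|_{V'})=\langle x^{k}\rangle$, both are scalar multiples of $x^{k}$, so the whole family $T_yT_z=T_zT_y$ reduces to the two scalar equations ``the $x^{k}$-coefficient of $[T_y,T_z](y)$ vanishes'' and ``the $x^{k}$-coefficient of $[T_y,T_z](1)$ vanishes''. I expect the first to hold identically on $Z$ after substituting the Step~1 relations (a symmetric double-sum cancellation), while the second reads $a^{zy}_{x^{k}}=\bigl(\text{polynomial in }a^z_y,\ a^z_{x^i},\ a^{xy}_{x^i},\ a^{y^2}_{x^{k}}\bigr)$, which is triangular. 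Hence $V_\lambda$ is an affine space, indeed of dimension $2k+1=2n-3$.

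\textbf{The main obstacle.} The conceptual content is clean: commuting with the regular nilpotent $T_x|_{V'}$ forces $T_y|_{V'}$ and $T_z|_{V'}$ to be polynomials in it, which collapses the hard commutator $[T_y,T_z]$ down to a single scalar entry. The real work is the bookkeeping of Step~1 — choosing the order and verifying triangularity of every $T_x$-commutator equation — and the one explicit computation in Step~2, namely checking that the $x^{k}$-coefficient of $[T_y,T_z](y)$ vanishes on $Z$; this is the single place where a genuine, if routine, cancellation must be carried out by hand.
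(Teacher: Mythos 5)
Your proposal is correct and reaches the same reduction as the paper, but by a genuinely different mechanism. The paper's proof isolates the same two residual families, $[T_y,T_z](t_{001})=0$ and $[T_y,T_z](t_{000})=0$, and handles them by explicit operator manipulations with the one-sided inverse $D_x$ of $T_x$: the first display shows $T_yT_z(t_{001})=T_zT_y(t_{001})$ is entirely implied by the other relations, and the second shows $[T_y,T_z](t_{000})$ vanishes on the subspace $W$ omitting $t_{0,n-2,0}$, leaving exactly one triangular equation for $a^{101}_{0,n-2,0}$ (your $a^{zy}_{x^k}$). Your route replaces the $D_x$ computations by the observation that $T_x|_{V'}$ is regular nilpotent, so its commutant is $k[T_x|_{V'}]$, forcing $[T_y,T_z]|_{V'}=0$, and then the Jacobi identity confines $[T_y,T_z](1)$ and $[T_y,T_z](y)$ to $\ker(T_x|_{V'})=\langle x^k\rangle$. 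This is cleaner and more conceptual, and it explains \emph{a priori} why only the top coefficient can survive. The one item you leave as ``expected'' --- that the $x^k$-coefficient of $[T_y,T_z](y)$ vanishes identically on $Z$ --- is exactly what the paper's first display proves, and it closes in one line inside your own framework: since $T_y(x)$ and $T_z(x)$ lie in $\langle x^2,\dots,x^k\rangle$, one has $T_y|_{V'}=Nh_y(N)$ and $T_z|_{V'}=Nh_z(N)$ with $N=T_x|_{V'}$, whence
\[
T_yT_z(y)=h_y(N)\,N T_z(y)=h_y(N)\,T_zT_x(y)=h_y(N)h_z(N)\,N\,T_x(y)
\]
is symmetric in $y\leftrightarrow z$, using the Step~1 relations $NT_z(y)=T_zT_x(y)$ and $NT_y(y)=T_yT_x(y)$. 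With that inserted, your argument is complete and gives the correct dimension $2n-3$, matching the paper's $n=4$ example.
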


\begin{proof}
We need to check that $T_zT_y(t_{001})=T_yT_z(t_{001})$ and $T_zT_y(t_{000})=T_yT_z(t_{000})$ are implied by other relations or given by affine spaces. Notice that $T_y$ send the last term of $T_z(t_{001})$ to $0$. Hence $D_x$ is well-defined during the calculation below:
\[
T_yT_z(t_{001})=T_yD_xT_xT_z(t_{001})=T_yD_xT_zT_x(t_{001})=T_zD_xT_yT_x(t_{001})=T_zT_y(t_{001})
\]

Notice that $a_{0x0}^{101}$ is expressed in two quadrics in $T_zT_y(t_{000})=T_yT_z(t_{000})$ and $T_zT_x(t_{001})=T_xT_z(t_{001})$ except when $x=n-2$. Hence we only need to show that the two expressions are the same. Let $V$ be the vector space generated by the basis vectors $t_{001},t_{0x0},0\leq x\leq n-2$. Let $W$ be the subspace generated by those vectors excluding $t_{0,n-2,0}$. In other words, we show that $T_zT_y(t_{000})|_W=T_yT_z(t_{000})|_W$ is implied by $T_zT_x(t_{001})=T_xT_z(t_{001})$ and other relations:
\[
T_zT_y(t_{000})|_W=T_zD_xT_x(t_{001})|_W=T_zD_xT_y(t_{010})|_W=T_yD_xT_z(t_{010})|_W
\]
\[
=T_yD_xT_xT_z(t_{000})|_W=T_yT_z(t_{000})|_W
\]
\end{proof}

\begin{lemma}
All the strata as  
\begin{tabular}{|c|c|c|c|}
\cline{1-1}
$1$ &\multicolumn{1}{|c}{}  \\
\cline{1-1}
$\vdots$ &\multicolumn{1}{|c}{}  \\
\hline
$1$ & $1$ & $\cdots$ & $1$ \\
\hline
\end{tabular}
are affine spaces.
\end{lemma}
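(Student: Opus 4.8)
The plan is to run the same argument as in the three preceding lemmas, the only new feature being that the leg of the hook now has arbitrary length. Write the partition as $\lambda_{0,0}=\lambda_{1,0}=\cdots=\lambda_{p,0}=1$ (the arm, in the $x_1=x$ direction) and $\lambda_{0,1}=\cdots=\lambda_{0,q}=1$ (the leg, in the $x_2=y$ direction), so that $\mathcal{O}_\lambda=\{t_{0,i,0}:0\le i\le p\}\cup\{t_{0,0,j}:1\le j\le q\}$ and $n=p+1+q$; the case $q=1$ is the preceding lemma. By Theorems \ref{com} and \ref{hardrel}, $V_\lambda$ is the subscheme of the affine space with coordinates $a^*_*$ cut out by the pairwise-commutation relations of $T_z=T_0$, $T_x=T_1$, $T_y=T_2$. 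As in Remark \ref{order} and the preceding lemmas, the relations produced by the pairs $(x,z)$ and $(x,y)$ form a triangular system: each either equates one coordinate to another or expresses a coordinate as a polynomial in coordinates that are larger in a fixed total order, so they eliminate variables one at a time without cutting anything out. Hence the only relations that could keep $V_\lambda$ from being an affine space are those coming from $T_yT_z=T_zT_y$.

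First I would reduce these to the leg. For an arm vector $t_{0,i,0}$ with $i\ge 1$ one has $t_{0,i,0}=D_x(t_{0,i+1,0})$, where $D_x\colon t_{0,i,0}\mapsto t_{0,i-1,0}$ annihilates $t_{0,0,0}$ and the leg vectors; on $V_\lambda$ it commutes with $T_z$ and with $T_y$ on the span of the arm vectors, by $T_xT_z=T_zT_x$ and $T_xT_y=T_yT_x$. The downward induction of Lemma \ref{line}, started at $i=p$, then shows that $T_yT_z(t_{0,i,0})=T_zT_y(t_{0,i,0})$ for all $1\le i\le p$ follows once it holds at the corner $t_{0,0,0}$. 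It remains to check the identity on the leg vectors $t_{0,0,j}$, $0\le j\le q$. For $1\le j\le q$ I would argue by downward induction on $j$: at the top of the leg both sides reduce, through the generator with leading term $y^{q+1}$ (whose expansion lies in the span of the arm vectors), to an arm-only commutation already disposed of by the Lemma \ref{line}-type argument; and for smaller $j$ one uses $T_y(t_{0,0,j})=t_{0,0,j+1}$ to rewrite $T_zT_y(t_{0,0,j})$ as the expansion of $zy^{j+1}$, while $T_yT_z(t_{0,0,j})$ is $T_y$ applied to the expansion of $zy^{j}$; comparing, the relation expresses the coordinates of the generator with leading term $zy^{j+1}$ in terms of those of the generator with leading term $zy^{j}$ and the (already triangular) action of $T_y$. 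This is the analogue of the $t_{0,0,1}$-computation in the case $q=1$, and is again of variable-eliminating form.

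What is left, and what I expect to be the crux, is the corner relation $T_yT_z(t_{0,0,0})=T_zT_y(t_{0,0,0})$. Because $T_y(t_{0,0,0})=t_{0,0,1}$ while $T_z(t_{0,0,0})=z$ is a border monomial, this relation couples the coordinates of the generators with leading terms $zy^{j}$ (as one runs down the leg) with quadratic expressions in the remaining coordinates; but, exactly like the coefficients $a_{0x0}^{101}$ in the case $q=1$, some of those coordinates are \emph{also} pinned down by the $(x,z)$-relations applied to the leg vectors $t_{0,0,j}$ from the first step, so affineness forces us to verify that the two determinations of each such coordinate agree. The plan is to insert $D_xT_x=\mathrm{id}$ at the right spot in $T_yT_z(t_{0,0,0})$, push $D_x$ past $T_y$ and $T_z$ using the horizontal commutations --- legitimate because, as in the case $q=1$, $T_y$ kills the only term on which $D_x$ would be ill-defined --- and land on an expression visibly equal to the one coming from the $(x,z)$-relation on $t_{0,0,1}$; repeating this level by level down the leg settles all the $zy^{j}$-coordinates, while the few coordinates in the directions $D_x$ cannot reach (the end of the arm and the top of the leg) are simply free. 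After all the eliminations $V_\lambda$ is cut out only by variable-eliminating relations, hence is an affine space. The main obstacle is precisely this last consistency check: it is routine for $q=1$, but the bookkeeping grows with the length of the leg, and one must have the downward induction on $j$ already in place before invoking it.
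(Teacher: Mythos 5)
Your proposal is correct and takes essentially the same route as the paper: the paper's entire proof of this lemma is the single line ``Similar as above,'' deferring to the preceding hook-of-height-one case, and your plan --- triangular elimination from the $(x,z)$ and $(x,y)$ commutations, the $D_x$-insertion trick to propagate $T_yT_z=T_zT_y$ along the arm and leg, and the consistency check for the doubly-determined coefficients at the corner --- is exactly the argument being invoked there. If anything, your writeup is more explicit than the paper's about where the real work (the level-by-level consistency check down the leg) lies.
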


\begin{proof}
Similar as above.
\end{proof}

\begin{lemma}
The strata  
\begin{tabular}{|c|c|}
\hline
$1$ & $1$ \\
\hline
$1$ & $1$\\
\hline
\end{tabular}\hspace{0.5cm}
\begin{tabular}{|c|c|c|}
\cline{1-2}
$1$ & $1$ & \multicolumn{1}{|c}{}\\
\hline
$1$ & $1$ & $1$\\
\hline
\end{tabular}
are affine spaces.
\end{lemma}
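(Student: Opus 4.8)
The plan is to run, for each of the two partitions $\lambda$ in the statement, the explicit computation illustrated by the earlier examples: from the generators $F_M$ of Proposition \ref{newgen} write down the three formal multiplication matrices $T_{x_0},T_{x_1},T_{x_2}$, impose the commutation relations of Theorems \ref{com} and \ref{hardrel}, and check that the resulting relations among the coordinate variables $a^{*}_{*}$ can be solved triangularly, each relation determining one new variable as a polynomial in the others. Together with the uniqueness statement of Remark \ref{unique}, and by the same reasoning as in Remark \ref{order}, this realizes $V_\lambda$ as an affine space.

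First I would fix notation as in the examples, writing $x_0=z$, $x_1=x$, $x_2=y$. For the $2\times2$ square one has $n=4$, $\mathcal{O}_{\lambda}=\{1,x,y,xy\}$, $\partial\mathcal{O}_{\lambda}=\{z,zx,zy,zxy,x^2,x^2y,y^2,xy^2\}$; for the second partition one has $n=5$, $\mathcal{O}_{\lambda}=\{1,x,x^2,y,xy\}$, $\partial\mathcal{O}_{\lambda}=\{z,zx,zx^2,zy,zxy,x^3,x^2y,y^2,xy^2\}$. In both cases $\lambda$ is flat ($\lambda_{i_1,i_2}\in\{0,1\}$), so $z$ does not occur in $\mathcal{O}_{\lambda}$, the matrix $T_z$ is nilpotent, and each product $x_1 t_\ell$ or $x_2 t_\ell$ leaves $\mathcal{O}_{\lambda}$ after at most two steps; hence all three matrices are small and can be written out by hand.

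Next I would process the three commutators in turn. The relation $[T_x,T_y]=0$ is automatic for the square and amounts to a single linear identity ($a^{002}_{010}=a^{012}_{020}$) for the second partition; $[T_x,T_z]=0$ produces only linear identities, each equating a $zx$-type border coefficient to a $z$-type one (and, in the $n=5$ case, also a $zxy$-type coefficient to a $zy$-type one). The relation $[T_y,T_z]=0$ carries the real content: on the basis vector $t_{0,0,0}=1$ it gives a triangular system expressing the $zy$-type coefficients $a^{101}_{*}$ (and, for $n=5$, the $zxy$-type coefficient) as polynomials in the $z$-type coefficients $a^{100}_{*}$ and the $y^2$-type coefficients $a^{002}_{*}$, while on every other basis vector it yields only relations already implied by the preceding ones. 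For the vectors lying in the image of $T_{x_1}$ (namely $x$, $x^2$, $xy$) this implication follows either from the lowering-operator ($D_{x_1}$) manipulation of Lemma \ref{line} and the subsequent lemmas or from direct substitution; the single vector $t_{0,0,1}=y$, which is not in the image of $T_{x_1}$, must be handled by substituting the relations already obtained, whereupon its two quadratic components collapse to tautologies.

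I expect the only non-mechanical step to be this last redundancy check on $t_{0,0,1}=y$, where the pull-back trick does not apply: it requires substituting the few already-established linear relations into a single quadratic identity and observing that both sides agree, which is routine. Once the relations are organized this way, each stratum is cut out inside the affine space on the coordinates $a^{*}_{*}$ by a system in which every equation is solved for a distinct variable, so $V_\lambda$ is isomorphic to the affine space on the remaining free variables; counting these gives $V_\lambda\cong\bba^{5}$ for the $2\times2$ square and $V_\lambda\cong\bba^{7}$ for the second partition.
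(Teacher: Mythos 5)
Your proposal is correct and follows essentially the same route as the paper: both identify $a^{101}_{010}$ as the one coefficient determined twice by quadratic relations (from $[T_y,T_z]$ evaluated at $t_{000}$ and again at $t_{001}$) and show the second determination is redundant, the paper via the operator identity $T_yT_z(t_{001})=T_yT_zT_y(t_{000})=\cdots$ and you by direct substitution of the linear relations coming from $[T_x,T_z]$. Your descriptions of $\mathcal{O}_\lambda$ and $\partial\mathcal{O}_\lambda$ and your dimension counts ($V_\lambda\cong\bba^{5}$ and $\bba^{7}$) also check out.
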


\begin{proof}
We prove the first case. The second case is similar. The variable $a^{101}_{010}$ is expressed in two quadrics in $T_zT_y(t_{000})=T_yT_z(t_{000})$ and $T_zT_y(t_{001})=T_yT_z(t_{001})$, and we will show the second is implied by the first and some other relations:
\[
T_yT_z(t_{001})=T_yT_zT_y(t_{000})=T_yT_yT_z(t_{000})=T_yT_yD_xT_xT_z(t_{000})=T_yT_yD_xT_zT_x(t_{000})
\]
\[
=T_yT_yD_xT_z(t_{010})=a^{110}_{011}a^{002}_{010}t_{011}=T_zT_y(t_{001})
\]
\end{proof}

\begin{lemma}
The strata  
\begin{tabular}{|c|c|}
\hline
$1$ & $1$ \\
\hline
$2$ & $1$\\
\hline
\end{tabular}\hspace{0.5cm}
\begin{tabular}{|c|c|}
\cline{1-1}
 $1$ & \multicolumn{1}{|c}{}\\
\hline
 $2$ & $2$\\
\hline
\end{tabular}
are affine spaces.
\end{lemma}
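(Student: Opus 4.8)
The plan is to prove the first case in full; the second is entirely parallel. For the first stratum $\lambda$ has $\lambda_{0,0}=2$ and $\lambda_{1,0}=\lambda_{0,1}=\lambda_{1,1}=1$ (so $n=5$), with $\mathcal{O}_\lambda=\{1,z,x,y,xy\}$ and $\partial\mathcal{O}_\lambda=\{z^2,zx,zy,zxy,x^2,x^2y,y^2,xy^2\}$; for the second, $\lambda_{0,0}=\lambda_{1,0}=2$ and $\lambda_{0,1}=1$, with $\mathcal{O}_\lambda=\{1,z,x,zx,y\}$ and $\partial\mathcal{O}_\lambda=\{z^2,z^2x,zy,x^2,zx^2,xy,y^2\}$. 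In each case I would first write the generators $F_b$ of Proposition \ref{newgen} with undetermined coefficients $a^{\ast}_{\ast}$, assemble the three formal multiplication matrices $T_z,T_x,T_y$ on the basis $\mathcal{O}_\lambda$, and then list every equation produced by Theorem \ref{hardrel} from $T_xT_z=T_zT_x$, $T_yT_z=T_zT_y$ and $T_xT_y=T_yT_x$ evaluated on each of the five basis monomials.

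The crux is the comparison with Example \ref{minimal}, whose shape ($\lambda_{0,0}=2$, $\lambda_{1,0}=\lambda_{0,1}=1$) is obtained from each of our two shapes by deleting one box. There the only relation that is not ``triangular'' (a coefficient equal to another one, or to a polynomial in coefficients attached to strictly larger monomials, in the sense of Remark \ref{order}) is the one from $T_yT_z(t_{100})=T_zT_y(t_{100})$, namely the genuine quadric $a^{002}_{010}a^{200}_{001}=0$, which makes $V_\lambda$ reducible. For each shape here the extra box enlarges $\mathcal{O}_\lambda$ just enough that a \emph{linear} relation forces one factor of that quadric to vanish, so the quadric becomes vacuous. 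Concretely: for the first shape the box at $(1,1)$ puts $xy\in\mathcal{O}_\lambda$, so $T_x(t_{010})=t_{011}\neq 0$, and $T_xT_z(t_{100})=T_zT_x(t_{100})$ reads $a^{200}_{001}=0$ on the nose; for the second shape the box making $\lambda_{1,0}=2$ puts $zx\in\mathcal{O}_\lambda$, so $T_z(t_{010})=t_{110}\neq 0$, and $T_yT_z(t_{010})=T_zT_y(t_{010})$ reads $a^{002}_{010}=0$. After this, I expect the surviving equations to express the remaining non-free coefficients one at a time as polynomials in the free ones, so that the coordinate ring of $V_\lambda$ is a polynomial ring; a coefficient count then shows $V_\lambda\cong\bba^5$ in both cases.

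I expect the only real work to be organizational: producing the complete and correct list of relations from Theorem \ref{hardrel} for all three commutator pairs on all five basis monomials, and checking that, once the vacuous quadric is discarded, each remaining relation is of the ``solve for one new variable'' type with no variable over-determined. If some coefficient turned out to be pinned down by two distinct commutator identities, I would reconcile the two expressions as in Lemma \ref{line} and the subsequent appendix lemmas, by shifting the identity down from a larger basis monomial via $D_x$ together with $T_xT_z=T_zT_x$ and $T_xT_y=T_yT_x$; for the two shapes at hand a direct check should show this does not arise. The one genuinely structural point --- that the offending quadratic factor of Example \ref{minimal} is annihilated by a linear relation created by the extra box --- is exactly the observation made above, and it is where the argument departs from the non-affine cases.
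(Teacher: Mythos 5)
Your strategy is exactly the paper's: isolate the single problematic quadric inherited from Example \ref{minimal} and show that one of its factors is already forced to vanish by a linear relation coming from another commutator identity. For the first stratum your account matches the paper's proof verbatim --- the only obstruction is $a^{200}_{001}a^{002}_{010}=0$ from $T_yT_z(t_{100})=T_zT_y(t_{100})$, and $T_xT_z(t_{100})=T_zT_x(t_{100})$ gives $a^{200}_{001}=0$ outright (your ``$T_x(t_{010})=t_{011}$'' should read $T_x(t_{001})=t_{011}$, but the substance is right, since $T_xT_z(t_{100})=a^{200}_{001}\,t_{011}$ while $T_zT_x(t_{100})=0$).

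Your treatment of the second stratum, however, contains a concrete error. With $\mathcal{O}_\lambda=\{1,z,x,zx,y\}$ the monomials $xy$ and $zxy$ both lie in $I$ (their expansions are empty, since every index lexicographically above $(1,1)$ has $\lambda=0$); note also that $zxy=y\cdot(zx)$ belongs to $\partial\mathcal{O}_\lambda$ and is missing from your border list. Consequently $T_yT_z(t_{010})=T_y(t_{110})=0$ and $T_zT_y(t_{010})=T_z(0)=0$, so the identity you invoke is vacuous and does \emph{not} yield $a^{002}_{010}=0$. The linear relation that actually kills the quadric $a^{200}_{001}a^{002}_{010}=0$ (which here arises from $T_yT_z(t_{100})=T_zT_y(t_{100})$) is $a^{002}_{010}=0$ coming from $T_zT_y(t_{001})=T_yT_z(t_{001})$: the left side equals $a^{002}_{010}\,t_{110}$ while the right side vanishes because $T_y$ annihilates both $t_{010}$ and $t_{110}$. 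With this correction the remaining relations only pin down $a^{101}_{010}=a^{200}_{001}a^{002}_{110}$, and $V_\lambda\cong\bba^5$ with free coordinates $a^{200}_{010},a^{200}_{110},a^{200}_{001},a^{101}_{110},a^{002}_{110}$. So the idea is the right one and coincides with the paper's, but the verification you give for the second case would fail as written; it needs the identity (and the border) corrected as above.
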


\begin{proof}
We prove the first case. The second case is similar. The only problem we may have is $a^{200}_{001}a^{002}_{010}=0$ from $T_yT_z(t_{100})=T_zT_y(t_{100})$. But $a^{200}_{001}=0$ from $T_zT_x(t_{100})=T_xT_z(t_{100})$, so the stratus is affine.
\end{proof}

\begin{lemma}
The stratum
\begin{tabular}{|c|c|c|}
\hline
$2$ & $2$ & $1$\\
\hline
\end{tabular}
is affine.
\end{lemma}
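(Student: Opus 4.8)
The plan is to run the same monomial-by-monomial bookkeeping used in the preceding lemmas, isolating the one place where a nonlinear relation can appear. Write $x_0=z$, $x_1=x$, $x_2=y$. Here $\mathcal{O}_\lambda=\{1,z,x,xz,x^2\}$ and $\partial\mathcal{O}_\lambda=\{z^2,xz^2,x^2z,x^3,\,y,yz,xy,xyz,x^2y\}$, so by Proposition~\ref{newgen} an ideal of type $\lambda$ has the form
\[
\begin{aligned}
I=(&z^2-a^{200}_{010}x-a^{200}_{110}xz-a^{200}_{020}x^2,\; xz^2-a^{210}_{020}x^2,\; y-a^{001}_{010}x-a^{001}_{110}xz-a^{001}_{020}x^2,\\
&yz-a^{101}_{010}x-a^{101}_{110}xz-a^{101}_{020}x^2,\; xy-a^{011}_{020}x^2,\; xyz-a^{111}_{020}x^2,\; x^2z,\; x^3,\; x^2y),
\end{aligned}
\]
so $V_\lambda$ is a priori a closed subscheme of $\bba^{12}$. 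First I would write down the three formal multiplication matrices $T_z,T_x,T_y$ on the ordered basis $t_{000},t_{100},t_{010},t_{110},t_{020}$ straight from these generators (note $x^2z$, $x^3$, $x^2y$ all act by $0$), and invoke Theorems~\ref{com} and~\ref{hardrel}: $V_\lambda$ is exactly the subvariety of $\bba^{12}$ on which $T_z,T_x,T_y$ pairwise commute.

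Next I would read those commutation relations off one basis vector at a time. I expect $T_xT_z=T_zT_x$ to reduce to the single linear relation $a^{210}_{020}=a^{200}_{010}$ (the nontrivial equation coming from $t_{100}$); $T_xT_y=T_yT_x$ to reduce to $a^{011}_{020}=a^{001}_{010}$ and $a^{111}_{020}=a^{101}_{010}$ (from $t_{000}$ and $t_{100}$); and $T_yT_z=T_zT_y$, evaluated on $t_{010}$, $t_{000}$, $t_{100}$, to give $a^{111}_{020}=0$, then $a^{101}_{010}=0$, $a^{101}_{110}=a^{001}_{010}$, $a^{101}_{020}=a^{001}_{110}a^{210}_{020}$, together with the one equation in which products of the $a$'s occur, namely
\[
a^{200}_{010}a^{011}_{020}+a^{200}_{110}a^{111}_{020}=a^{101}_{110}a^{210}_{020}
\]
from $T_yT_z(t_{100})=T_zT_y(t_{100})$. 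Keeping $a^{200}_{010},a^{200}_{110},a^{200}_{020},a^{001}_{010},a^{001}_{110},a^{001}_{020}$ as free coordinates, all six remaining variables are eliminated by the linear relations above, and substituting $a^{011}_{020}=a^{001}_{010}$, $a^{111}_{020}=0$, $a^{101}_{110}=a^{001}_{010}$, $a^{210}_{020}=a^{200}_{010}$ into the displayed equation turns it into $a^{200}_{010}a^{001}_{010}=a^{001}_{010}a^{200}_{010}$, which is an identity. So $V_\lambda\cong\bba^{6}$ and the stratum is affine.

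The step I expect to need the most care — and the most likely source of a sign or indexing slip — is the family of relations from $T_yT_z=T_zT_y$: one has to check that its coefficient-of-$t_{110}$ equations on $t_{000}$ and on $t_{100}$ are mutually consistent (both say $a^{101}_{010}=0$) and that both composites vanish on $t_{110}$ and on $t_{020}$, so that the displayed quadratic really is the only nonlinear constraint present. As an independent check one can avoid the matrices entirely and argue intrinsically: eliminating $y$ through the generator $y-a^{001}_{010}x-a^{001}_{110}xz-a^{001}_{020}x^2$, an ideal of type $\lambda$ becomes $(z^2+\alpha_1 x+\alpha_2 xz+\alpha_3 x^2,\ zx^2,\ x^3)\subset k[z,x]$, and one verifies that for every $\alpha_1,\alpha_2,\alpha_3$ the quotient is spanned by $1,z,x,xz,x^2$ (using $xz^2\equiv-\alpha_1 x^2$, $z^3\equiv-\alpha_1 xz+\alpha_1\alpha_2 x^2$ and $x^2z\equiv x^3\equiv 0$), so the codimension is $5$ for all choices; since the $\alpha_i$ and the three coefficients of $y$ are then unconstrained, this again identifies $V_\lambda$ with $\bba^{6}$.
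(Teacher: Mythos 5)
Your computation is correct and follows essentially the same route as the paper: both identify the commutation relation $T_yT_z(t_{100})=T_zT_y(t_{100})$ as the unique source of a quadratic constraint and show it collapses to an identity once the linear relations (in particular $a^{111}_{020}=0$, $a^{210}_{020}=a^{200}_{010}$, $a^{011}_{020}=a^{001}_{010}=a^{101}_{110}$) are substituted, so $V_\lambda\cong\bba^6$. Your explicit listing of all generators and relations, plus the elimination-of-$y$ cross-check, is a more detailed version of the paper's one-line $D_x$ manipulation, but the underlying argument is the same.
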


\begin{proof}
We only need to check that $T_zT_y(t_{100})=T_yT_z(t_{100})$ is implied by other relations:
\[
T_yT_z(t_{100})=T_yD_xT_z(t_{110})=a^{210}_{020}a^{011}_{020}t_{020}=a^{210}_{020}a^{101}_{110}t_{020}=T_zT_y(t_{100}),
\]
where we implicitly use $a^{111}_{020}=0$ from $T_zT_y(t_{010})=T_yT_z(t_{010})$.
\end{proof}

\begin{remark}
The above lemmas and the examples in Sectoin 2 cover all the cases that need to be checked for $n\leq 5$. For the cases not being covered, either all the relations are linear or there is no variable being expressed twice involving quadrics. Hence those strata are affine spaces. We believe there should be a more uniform and geometric way to describe this phenomenon.
\end{remark}

\end{document}